%% file: main.tex
\documentclass[11pt,a4paper]{article}
\usepackage[T1]{fontenc}
\usepackage[utf8]{inputenc}
\usepackage{lmodern}
\usepackage[margin=27mm]{geometry}
\usepackage{amsmath,amssymb,amsthm,mathtools,booktabs,array,longtable}
\usepackage{graphicx,microtype,enumitem}
\usepackage[colorlinks=true,linkcolor=blue!45!black,citecolor=blue!45!black,urlcolor=blue!45!black]{hyperref}
\usepackage{xcolor}
\hypersetup{pdftitle={Stationary Coexistence in Stochastic Gause Systems with General Trophic Responses},pdfauthor={Andres Sanchez; Leon A. Valencia; Jorge M. Ramirez Osorio}}
\setlist{nosep}
\newtheorem{theorem}{Theorem}[section]
\newtheorem{proposition}[theorem]{Proposition}
\newtheorem{lemma}[theorem]{Lemma}
\newtheorem{corollary}[theorem]{Corollary}
\theoremstyle{definition}
\newtheorem{assumption}[theorem]{Assumption}
\newtheorem{definition}[theorem]{Definition}
\theoremstyle{remark}
\newtheorem{remark}[theorem]{Remark}
\newcommand{\R}{\mathbb R}
\newcommand{\E}{\mathbb E}
\newcommand{\Prob}{\mathbb P}
\newcommand{\Lcal}{\mathcal L}
\newcommand{\mux}{\mu_X}
\newcommand{\pii}{\pi^{\circ}}
\newcommand{\dd}{\,\mathrm d}
\newcommand{\sig}{\sigma_*^2}
\newcommand{\lambdax}{\lambda_x^0}
\newcommand{\lambday}{\lambda_y}
\newcommand{\st}{\leq_{\mathrm{st}}}
\input{numerics/results.tex}
\input{numerics/table.tex}
\input{author_details.tex}
\title{Stationary Coexistence in Stochastic Gause Systems\\with General Trophic Responses}
\author{Andr\'es S\'anchez\AuthorOneDetails \and Leon A. Valencia\AuthorTwoDetails \and Jorge M. Ramirez Osorio\AuthorThreeDetails}
\date{}
\begin{document}
\maketitle
\begin{abstract}
We study stationary coexistence in stochastic Gause predator--prey systems with independent multiplicative environmental noise and trophic responses depending on both population densities. A weighted population balance cancels the trophic interaction and yields moments of order greater than one without imposing predator self-regulation. These estimates give a direct occupation-measure proof of a sufficient coexistence criterion based on prey establishment and predator invasion of the prey-only stationary environment. For responses whose rare-predator limit is strictly increasing and whose value at positive predator density never exceeds that limit, we also establish necessity of positive predator invasion whenever the prey-only stationary law exists. If the prey per-capita growth attains its global maximum at zero, interior stationarity itself forces prey establishment, giving a necessary-and-sufficient criterion beyond logistic growth. Logistic and bounded saturating growth laws provide explicit Gamma and beta-prime resident distributions. Applications include Holling, Ivlev, Beddington--DeAngelis, and Crowley--Martin responses. Responses with the same rare-predator limit share the coexistence threshold, although predator interference changes the interior distribution. Reproducible quadrature and simulations illustrate this distinction. The analysis builds on existing stochastic persistence theory and identifies precisely the additional comparison argument needed for necessity, including exclusion of interior invariant measures at zero predator invasion.
\end{abstract}
\noindent\textbf{Keywords:} stochastic predator--prey system; invasion rate; invariant probability measure; functional response; stochastic comparison; population persistence.

\medskip\noindent\textbf{Mathematics Subject Classification (2020):} 60H10, 60J60, 92D25.

\section{Introduction}
An obligate predator can persist only if prey consumption compensates for mortality and environmental fluctuations. In a fluctuating prey environment, the relevant comparison is between these losses and feeding averaged against the resident prey distribution. The existence of a stationary distribution with both species present is therefore naturally related to the growth rate of a rare predator. Whether this invasion test is also necessary depends on how predation changes prey availability and how conspecific predators affect feeding. Classical prey-dependent feeding responses originate in Holling's analysis \cite{Holling}; interference mechanisms motivate the predator-dependent responses of Beddington \cite{Beddington}, DeAngelis, Goldstein, and O'Neill \cite{DeAngelis}, and Crowley and Martin \cite{Crowley}.

We consider the Gause system
\begin{equation}\label{eq:model}
\begin{aligned}
\mathrm dX_t&=[X_tF(X_t)-\Phi(X_t,Y_t)Y_t]\,\mathrm dt+\sigma_xX_t\,\mathrm dB_{x,t},\\
\mathrm dY_t&=Y_t[\beta\Phi(X_t,Y_t)-\delta]\,\mathrm dt+\sigma_yY_t\,\mathrm dB_{y,t},
\end{aligned}
\end{equation}
where $X$ and $Y$ are prey and predator densities, $F$ is the prey per-capita growth, and $\Phi(x,y)$ is the feeding rate of one predator. The Brownian motions are independent and $\beta,\delta,\sigma_x,\sigma_y>0$. Predators have negative growth in the absence of prey, and there is no additional term of predator self-regulation. Functional responses and their biological interpretation are discussed, for example, in \cite{Kot}.

The stochastic persistence approach relates coexistence to average logarithmic growth rates on the extinction boundary. Schreiber, Bena\"im, and Atchad\'e \cite{SBA} developed invasion criteria in compact settings. Hening and Nguyen \cite{HNkol} treated stochastic Kolmogorov systems on noncompact spaces, and Hening, Nguyen, and Chesson \cite{HNC} developed a broader coexistence and extinction theory. Hening and Li \cite{HL} studied stationary distributions and applications to ecological systems. These results separate boundary repulsion from control at infinity. Both are essential: positive invasion rates alone do not prevent occupation measures from escaping to large population densities.

For \eqref{eq:model}, the weighted population variable $W=1+x+y/\beta$ satisfies
\begin{equation}\label{eq:balance-intro}
\Lcal W=xF(x)-\delta y/\beta.
\end{equation}
The trophic terms cancel. Weighted estimates of this type have antecedents in stochastic food chains \cite{HNfood}; the linear-response logistic case is already covered there. Here the balance gives a common verification for general prey- and predator-dependent responses with $\Phi(x,y)/x$ bounded. We use the established invasion principle rather than claim a new general persistence mechanism. In particular, the sufficient coexistence conclusion is also recoverable from Theorem~4.4 and Corollary~4.5 of M. Bena\"im's unpublished manuscript \emph{Stochastic Persistence}, \href{https://arxiv.org/abs/1806.08450v3}{arXiv:1806.08450v3} (2023). Appendix~\ref{app:benaim} verifies this explicitly using the extended-generator assumptions in that version.

The converse uses additional ecological structure. Suppose
\begin{equation}\label{eq:no-fac-intro}
\Phi(x,y)\leq\Phi(x,0),\qquad x>0,\quad y\geq0,
\end{equation}
and $x\mapsto\Phi(x,0)$ is strictly increasing. Condition~\eqref{eq:no-fac-intro} excludes positive facilitation relative to the rare-predator regime; it does not require monotonicity at all positive predator densities. A stationary prey population subject to predation is stochastically dominated by the prey-only resident distribution. The two distributions must differ, since predation has strictly positive average intensity in an interior stationary regime. Strictly increasing feeding then gives a strict inequality between the corresponding average feeding rates, proving positive predator invasion. Comparison with the resident and strict inequalities of average growth have a close methodological antecedent in Nguyen and Strickler \cite{NS}, including a Rosenzweig--MacArthur application at criticality. Our result formulates the comparison for the structural response class in \eqref{eq:model}.

The comparison is not restricted to logistic growth. Conditional on prey establishment it gives necessity for the general dissipative $F$ considered below. If $F(x)\leq F(0)$, the stationary prey identity additionally forces prey establishment, and the criterion becomes necessary and sufficient. This covers logistic growth and bounded, nonlogistic growth laws. A bounded strictly increasing response causes no difficulty: strict monotonicity, rather than unboundedness, separates two distinct ordered laws.

Related response-specific results require careful comparison. Du, Nguyen, and Yin \cite{DNY} study Beddington--DeAngelis dynamics with positive predator self-regulation. Hening and Li \cite{HL}, Section~7.2, state an invasion criterion for Beddington--DeAngelis responses and include examples without predator self-regulation. Li and Guo \cite{LG} consider general functional responses with additional regulation and perturbations; Zou, Li, and Lv \cite{ZLL} obtain a necessary-and-sufficient classification for their regulated Beddington--DeAngelis model. Periodic coefficients and nonlinear jump perturbations are considered in \cite{JD,ZY}. These antecedents preclude a priority claim based solely on a classical response. A recent unpublished preprint by Bena\"im, Colombo, and Strickler (arXiv:2511.10113, 2025) treats degenerate Rosenzweig--MacArthur noise and quantitative convergence; its assumption of noise only in the prey differs from the two strictly positive noise intensities here. Our earlier prey-dependent Gause study \cite{Previous} concerns complementary extinction and invariance questions.

The contribution is a common stationary criterion with a direct proof, an explicit general-growth converse, and a precise distinction between a response's invasion limit and its interior dependence on predator density. Table~\ref{tab:scope} records the logical scope. No complete extinction classification or priority over all response-specific results is asserted. Section~\ref{sec:assumptions} gives the assumptions, Section~\ref{sec:results} states the results, Sections~\ref{sec:estimates}--\ref{sec:necessity} prove them, and Sections~\ref{sec:applications}--\ref{sec:numerics} give applications and numerical illustrations.

\begin{table}[t]
\centering\small
\caption{Scope of the stationary coexistence criteria}\label{tab:scope}
\begin{tabular}{p{.30\textwidth}p{.30\textwidth}p{.32\textwidth}}
\toprule Setting & Additional structure & Conclusion\\\midrule
General dissipative $F$ and admissible $\Phi$ & Positive prey establishment and predator invasion & Unique interior invariant measure\\[3pt]
General dissipative $F$ & \eqref{eq:no-fac-intro}, strictly increasing $\Phi(\cdot,0)$; prey establishment assumed & Interior stationarity requires positive predator invasion\\[3pt]
General dissipative $F$ with $F\leq F(0)$ & Same response conditions & Prey establishment and positive predator invasion are necessary and sufficient\\[3pt]
Logistic growth & Same response conditions & Explicit Gamma resident law and exact stationary criterion\\\bottomrule
\end{tabular}
\end{table}

\section{Model, assumptions, and boundary quantities}\label{sec:assumptions}
Let $M=[0,\infty)^2$, $E=(0,\infty)^2$, and $M_0=M\setminus E$. We work on a filtered probability space satisfying the usual conditions. All parameters in \eqref{eq:model} are strictly positive and the two standard Brownian motions are independent.

\begin{assumption}[Prey growth]\label{ass:F}
The function $F:[0,\infty)\to\R$ is $C^1$. There exist $C_F,a_F,d_F>0$ such that
\begin{equation}\label{eq:growth}
|F(x)|\leq C_F(1+x),\qquad xF(x)\leq a_F-d_Fx,\qquad x\geq0.
\end{equation}
\end{assumption}
\begin{assumption}[Trophic response]\label{ass:phi}
There is a nonnegative bounded locally Lipschitz function $\Psi:M\to[0,\infty)$ such that
\begin{equation}\label{eq:factor}
\Phi(x,y)=x\Psi(x,y),\qquad K_\Phi:=\sup_M\Psi<\infty.
\end{equation}
Moreover $\Phi(x,y)>0$ for $x>0$, $y\geq0$.
\end{assumption}
The factorization includes regularity at the prey-extinction axis. The inequality $0\leq\Phi\leq K_\Phi x$ alone is not equivalent to the full assumption: local Lipschitz regularity of the quotient at $x=0$ is also required. Neither monotonicity nor boundedness of $\Phi$ is assumed. The strict positivity implies $\Psi>0$ on $E$.

For a twice differentiable test function, the generator is
\begin{equation}\label{eq:generator}
\begin{split}
\Lcal f={}&[xF(x)-\Phi(x,y)y]f_x+y[\beta\Phi(x,y)-\delta]f_y\\
&+\tfrac12\sigma_x^2x^2f_{xx}+\tfrac12\sigma_y^2y^2f_{yy}.
\end{split}
\end{equation}
The logarithmic rates extend continuously to $M$:
\begin{equation}\label{eq:rates}
r_x=F(x)-\Psi(x,y)y-\sigma_x^2/2,\qquad
r_y=\beta\Phi(x,y)-\delta-\sigma_y^2/2.
\end{equation}
With
\begin{equation}\label{eq:W}
W=1+x+y/\beta,\quad d=\min(d_F,\delta),\quad
\sig=\max(\sigma_x^2,\sigma_y^2),
\end{equation}
we have $|r_x|+|r_y|\leq C W$. Section~\ref{sec:estimates} proves global well-posedness and the moment estimates needed below.

\begin{definition}
Stationary coexistence means that \eqref{eq:model} has an invariant probability measure $\pii$ with $\pii(E)=1$.
\end{definition}
Invariant boundary laws remain present even when stationary coexistence holds; uniqueness below concerns measures supported on $E$.

On the prey axis, the resident equation is
\begin{equation}\label{eq:resident}
\mathrm d\overline X_t=\overline X_t F(\overline X_t)\,\mathrm dt+
\sigma_x\overline X_t\,\mathrm dB_{x,t}.
\end{equation}
Define the prey establishment rate
\begin{equation}\label{eq:lambda-x}
\lambdax=F(0)-\sigma_x^2/2.
\end{equation}
If $\lambdax>0$, the unique resident invariant law on $(0,\infty)$ is $\mux$, with density
\begin{equation}\label{eq:resident-density}
p_X(x)=C x^{-2}\exp\left(\frac{2}{\sigma_x^2}\int_1^x\frac{F(u)}u\,\mathrm du\right),\qquad x>0.
\end{equation}
Its existence, integrability, and convergence from positive initial states are justified in Lemma~\ref{lem:resident}. Predator invasion is then well defined by
\begin{equation}\label{eq:lambda-y}
\lambday=\beta\int_0^\infty\Phi(x,0)\mux(\mathrm dx)-D_y,
\qquad D_y=\delta+\sigma_y^2/2.
\end{equation}
In particular, $\lambday$ is used only when the resident law exists.

\section{Main results}\label{sec:results}
\begin{theorem}[Sufficient invasion criterion]\label{thm:sufficient}
Under Assumptions~\ref{ass:F}--\ref{ass:phi}, if $\lambdax>0$ and $\lambday>0$, stationary coexistence holds. The interior invariant probability measure is unique. For every
\begin{equation}\label{eq:p-range}
1<p<1+\frac{2d}{\sig},
\end{equation}
it satisfies $\int_E W^p\dd\pii<\infty$.
\end{theorem}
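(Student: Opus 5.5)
The proof follows the occupation-measure route of stochastic persistence theory. Its two ingredients are a Lyapunov moment bound from the weighted balance \eqref{eq:balance-intro} and an invasion argument on the boundary $M_0$.

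\emph{Step 1: moments.} Apply $\Lcal$ to $W^p$. Using $\Lcal W=xF(x)-\delta y/\beta\leq a_F-d(x+y/\beta)\leq a_F+d-dW$, the first-order part contributes $pW^{p-1}(a_F+d-dW)$. The second-order part is $\tfrac12 p(p-1)W^{p-2}(\sigma_x^2x^2+\sigma_y^2y^2/\beta^2)$, which is at most $\tfrac12p(p-1)\sig W^p$ because $x^2+(y/\beta)^2\leq W^2$. Hence
\[
\Lcal W^p\leq -p\Bigl(d-\tfrac12(p-1)\sig\Bigr)W^p+C_pW^{p-1}\leq -c_pW^p+C'_p,
\]
with $c_p>0$ exactly in the range \eqref{eq:p-range}. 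I would use this first with $p=1$ and a localization argument to get nonexplosion and global well-posedness on $M$, with $E$ and the axes invariant since the coefficients vanish appropriately and are locally Lipschitz by Assumption~\ref{ass:phi}. Then Itô's formula with stopping times gives $\sup_t\E W^p(Z_t)<\infty$ and
\[
\frac1t\int_0^t\E W^p(Z_s)\dd s\leq C.
\]
Consequently the empirical occupation measures $\Pi_t=\tfrac1t\int_0^t\delta_{Z_s}\dd s$ are tight, and every weak limit is invariant and integrates $W^{p'}$ for $p'<p$. Since $|r_x|+|r_y|\leq CW$ and $p>1$, the functions $r_x,r_y$ are uniformly integrable along these limits.

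\emph{Step 2: boundary invasion.} Classify the ergodic invariant measures on $M_0$. By Lemma~\ref{lem:resident} and $\lambdax>0$, these are $\delta_{(0,0)}$ and $\mux\otimes\delta_0$; the predator axis carries only $\delta_0$, since $r_y=-D_y<0$ there. At $\delta_0$ the prey invades with rate $\int r_x\dd\delta_0=\lambdax>0$. At $\mux\otimes\delta_0$ the predator invades with rate $\int r_y\dd(\mux\otimes\delta_0)=\lambday>0$. I then take $p_1,p_2>0$ with $p_1\lambdax-p_2D_y>0$ and $p_2\lambday>0$, so that $\int(p_1r_x+p_2r_y)\dd\mu>0$ for every invariant $\mu$ on $M_0$.

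\emph{Step 3: occupation-measure argument.} This is the main obstacle, and I would follow the direct argument of Hening--Nguyen \cite{HNkol}. Take $V=p_1\log x+p_2\log y$. Itô's formula gives
\[
\frac{V(Z_t)-V(Z_0)}t=\int(p_1r_x+p_2r_y)\dd\Pi_t+\text{martingale}/t,
\]
and the martingale term tends to zero because its quadratic variation grows linearly. The left-hand side has $\limsup\leq0$, since $\log x\leq W$ and $\E W(Z_t)$ is bounded. If some weak limit $\pi$ of $\Pi_t$ gave mass to $M_0$, write $\pi=(1-\theta)\pi_E+\theta\pi_0$. The uniform integrability from Step 1 lets us pass $\int r\dd\Pi_t\to\int r\dd\pi$. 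One then shows $\int r_x\dd\pi_E=\int r_y\dd\pi_E=0$ for invariant measures on $E$, again using the $W^p$ integrability to justify Itô/ergodic averages. This forces $\theta\int(p_1r_x+p_2r_y)\dd\pi_0\leq0$, contradicting Step 2 unless $\theta=0$. Making this contradiction rigorous requires uniform control over all weak limits, which is where the care lies. It yields $\liminf_t\Pi_t(K)\geq1-\varepsilon$ for some compact $K\subset E$, hence an invariant $\pii$ with $\pii(E)=1$. By Fatou, $\int W^p\dd\pii<\infty$ for each $p$ in \eqref{eq:p-range}.

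\emph{Step 4: uniqueness.} The diffusion matrix $\mathrm{diag}(\sigma_x^2x^2,\sigma_y^2y^2)$ is nondegenerate on $E$, so the process is strong Feller and irreducible on $E$ (via the support theorem or Hörmander's condition in log coordinates). Two distinct ergodic invariant measures on $E$ would then have equivalent densities, which is impossible, so $\pii$ is the unique interior invariant probability measure.
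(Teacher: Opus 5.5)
Your overall architecture is the paper's: the $W^p$ drift bound from the trophic balance, uniform integrability of $r_x,r_y$ from $p>1$, the boundary classification of Lemma~\ref{lem:resident}, Lemma~\ref{lem:log} for the interior component, and strong Feller plus irreducibility in logarithmic coordinates. The gap is that you run Steps~1 and~3 on the random empirical measures $\Pi_t=\frac1t\int_0^t\delta_{Z_s}\dd s$, while Step~1 proves only expectation bounds. The estimate $\frac1t\int_0^t\E W^p(Z_s)\dd s\leq C$ does not give almost sure tightness of $\Pi_t$. Nor does it give almost sure uniform integrability of $r_x,r_y$ along a random subsequence. Likewise, getting from $\sup_t\E W(Z_t)<\infty$ to $\limsup_tV(Z_t)/t\leq0$ almost surely requires control of $\sup_{n\leq t\leq n+1}W(Z_t)$. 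All three need a pathwise strong law for $W^p$: an exponential martingale estimate for $\log W$ on unit intervals, Burkholder--Davis--Gundy, Kronecker's lemma and Borel--Cantelli, as in Appendix~\ref{app:benaim}. You cannot import these steps from \cite{HNkol}, because the pathwise lemmas there are proved under their Hypothesis~1.1(3), which fails for this model (Appendix~\ref{app:HN}). You acknowledge that ``the care lies'' here, but you do not supply the missing estimate.

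The simplest repair is the paper's route: use the expected occupation measures $\overline\Pi_T^z=\E_z\Pi_T$ instead. Your Step~1 bounds then give tightness and uniform integrability directly, and the Feller property from Proposition~\ref{prop:moments} makes limit points invariant. The Brownian term in $V(Z_T)$ has mean zero, and $\log u\leq u$ gives $\limsup_T\int(p_1r_x+p_2r_y)\dd\overline\Pi_T^z\leq0$. Your decomposition $\pi=(1-\theta)\pi_E+\theta\pi_0$, with Lemma~\ref{lem:log} removing the $\pi_E$ contribution, then forces $\theta=0$ for every limit point. That is slightly more than the paper records: it keeps $\int r_x\dd\pi\leq0$ and $\int r_y\dd\pi\leq0$ separate, splits on $\theta$ in $\nu_\theta$ only to exclude $\pi(M_0)=1$, and then normalizes $\pi(\cdot\cap E)$. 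Combined with uniqueness, your version yields $\overline\Pi_T^z\Rightarrow\pii$. Finally, the uniform compact $K\subset E$ with $\liminf_t\Pi_t(K)\geq1-\varepsilon$ is the pathwise persistence statement of Appendix~\ref{app:benaim}. It is not needed for existence, and it again depends on the missing strong law.
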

The direct proof uses expected occupation measures and is given in Section~\ref{sec:existence}. Appendix~\ref{app:benaim} relates it to the more general persistence framework and establishes almost sure convergence of empirical occupation measures.

The converse uses the following additional assumption.
\begin{assumption}[Comparison structure]\label{ass:comparison}
For $x>0$, $y\geq0$,
\begin{equation}\label{eq:comparison}
\Phi(x,y)\leq\Phi(x,0),\qquad x\longmapsto\Phi(x,0)\text{ is strictly increasing}.
\end{equation}
\end{assumption}
\begin{theorem}[Necessity under prey establishment]\label{thm:conditional}
Under Assumptions~\ref{ass:F}--\ref{ass:comparison}, suppose $\lambdax>0$. Then stationary coexistence holds if and only if $\lambday>0$. Whenever it holds, the interior invariant law is unique.
\end{theorem}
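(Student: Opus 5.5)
The ``if'' direction and uniqueness are immediate from Theorem~\ref{thm:sufficient}, since $\lambdax>0$ and $\lambday>0$ give stationary coexistence with a unique interior law. Uniqueness in the converse direction follows in the same way once we show $\lambday>0$. So the whole task is the ``only if'' direction. Suppose $\pii$ is an invariant probability measure with $\pii(E)=1$; we must show $\lambday>0$.

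\emph{Step 1 (stationary identities).} Apply It\^o's formula to $\log y$ and $\log x$ under stationarity, localized and justified by the moment bounds of Section~\ref{sec:estimates}. Since $|r_x|+|r_y|\le CW$, this needs $W\in L^1(\pii)$. The logarithmic drifts of an ergodic stationary interior process must average to zero: otherwise $\log Y_t/t$ would converge to a nonzero constant, contradicting tightness of the interior law and its zero mass near the boundary. Passing to ergodic components of $\pii$ if needed, we obtain
\[
\int_E\bigl(\beta\Phi(x,y)-D_y\bigr)\dd\pii=0,\qquad
\int_E\bigl(F(x)-\Psi(x,y)y-\sigma_x^2/2\bigr)\dd\pii=0.
\]
The first identity gives $\beta\int\Phi(x,y)\dd\pii=D_y$. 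Assumption~\ref{ass:comparison} then gives $\beta\int\Phi(x,0)\,\pi_X(\mathrm dx)\ge D_y$, where $\pi_X$ is the $x$-marginal of $\pii$. It therefore suffices to prove the strict inequality
\[
\int\Phi(x,0)\,\pi_X(\mathrm dx)<\int\Phi(x,0)\,\mux(\mathrm dx).
\]

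\emph{Step 2 (stochastic domination $\pi_X\st\mux$).} Start the full system from $\pii$ and drive the resident equation \eqref{eq:resident} with the same $B_x$, starting from $\overline X_0=X_0$. The prey drift $xF(x)-\Phi y$ is at most $xF(x)$. The diffusion coefficients coincide and are Lipschitz, so the one-dimensional comparison theorem (Ikeda--Watanabe or Yamada type, with a random dominated drift) gives $X_t\le\overline X_t$ almost surely for all $t$. The law of $X_t$ is $\pi_X$ for every $t$, while the law of $\overline X_t$ converges weakly to $\mux$ by Lemma~\ref{lem:resident}, because $X_0>0$ almost surely. Hence $\pi_X((a,\infty))\le\mux((a,\infty))$ for all continuity points $a$, and so for all $a$.

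\emph{Step 3 (strictness).} Since $g=\Phi(\cdot,0)$ is strictly increasing, we have
\[
\int g\,\dd\mux-\int g\,\dd\pi_X=\int\bigl(\mux(g>s)-\pi_X(g>s)\bigr)\dd s,
\]
using the layer-cake formula; integrability follows from $g\le K_\Phi x$ and the first moments. The integrand is nonnegative, and it is positive on a set of positive measure unless $\pi_X=\mux$. So it remains to exclude $\pi_X=\mux$. Suppose equality holds. Since $X_t\le\overline X_t$ and both have the same law $\mux$ at time $t$ (taking $\overline X_0\sim\mux$ equal to $X_0$), we get $X_t=\overline X_t$ almost surely for each $t$, and hence on a countable dense set and by continuity for all $t$. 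Then the drifts agree, forcing $\int_0^t\Phi(X_s,Y_s)Y_s\dd s=0$ almost surely. This contradicts $\Phi Y>0$ on $E$ together with $\pii(E)=1$. Alternatively, one can use the $\log x$ identity: under $\pi_X=\mux$, the resident identity $\int(F-\sigma_x^2/2)\dd\mux=0$ forces $\int\Psi y\dd\pii=0$, which is impossible since $\Psi>0$ on $E$. This second route is cleaner, because it only requires the resident log-identity from Lemma~\ref{lem:resident}.

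\emph{Main obstacle.} I expect the hardest part to be Step 1's rigorous zero-average identities for a possibly non-ergodic $\pii$ with only $W^p$ moments, together with the comparison in Step 2 under merely locally Lipschitz drifts. I would handle the identities via ergodic decomposition, noting that every ergodic component is interior and inherits integrability of $W$. I would handle the comparison by localizing with stopping times up to exit from compacts of $E$.
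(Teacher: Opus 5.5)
Your proposal is correct and follows essentially the paper's route. The structure is the same: the predator log identity combined with $\Phi(x,y)\le\Phi(x,0)$, pathwise domination of the prey by the resident started at $X_0$, and exclusion of $\pi_X=\mux$ via the prey log identity (your ``second route'' is exactly the paper's argument). The differences are only technical: you cite a general Ikeda--Watanabe comparison theorem, ergodic decomposition and a layer-cake formula, where the paper uses a direct log-difference Gronwall argument (the Brownian terms cancel), the ergodic theorem conditioned on the invariant $\sigma$-algebra, and an ordered quantile coupling.
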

\begin{theorem}[Exact criterion beyond logistic growth]\label{thm:exact}
Under Assumptions~\ref{ass:F}--\ref{ass:comparison}, assume in addition
\begin{equation}\label{eq:max-growth}
F(x)\leq F(0)\qquad(x\geq0).
\end{equation}
Stationary coexistence holds if and only if $\lambdax>0$ and, with the resulting resident law, $\lambday>0$. The interior invariant law is then unique.
\end{theorem}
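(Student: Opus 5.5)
The plan is to reduce Theorem~\ref{thm:exact} to Theorem~\ref{thm:conditional}. That reduction needs only one new fact: under \eqref{eq:max-growth}, any interior invariant probability measure forces $\lambdax>0$. Sufficiency and uniqueness then follow from Theorem~\ref{thm:sufficient}. Necessity of $\lambday>0$, once $\lambdax>0$ is known, is exactly Theorem~\ref{thm:conditional}. So the whole work is to show that stationary coexistence implies $F(0)>\sigma_x^2/2$.

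Suppose $\pii$ is invariant with $\pii(E)=1$. I apply the stationarity identity to the test function $f=\log x$. The generator gives $\Lcal\log x=r_x=F(x)-\Psi(x,y)y-\sigma_x^2/2$.

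To justify $\int r_x\dd\pii=0$, I would use the moment bound of Theorem~\ref{thm:sufficient} type, more precisely the Lyapunov estimates from Section~\ref{sec:estimates}. Those estimates hold for any invariant measure, since $\Lcal W^p\le C-cW^p$ forces $\int W^p\dd\pii<\infty$ by a standard truncation argument. Since $|r_x|\le CW$, the function $r_x$ is $\pii$-integrable.

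The identity itself comes from Itô's formula for $\log X_t$ started from $\pii$:
\[
\E\log X_t-\E\log X_0=\E\int_0^t r_x(X_s,Y_s)\dd s .
\]
The left side vanishes by stationarity, provided $\log x$ is integrable. To avoid assuming that, I would instead use a localization: apply the identity to $\log(x\wedge n)$ smoothed, or use $\frac1t\log X_t\to\int r_x\dd\pii$ via the ergodic theorem on an ergodic component. The sample-path route is the cleanest. Take an ergodic interior component of $\pii$. Birkhoff's theorem gives $\frac1t\log X_t-\frac1t\log X_0\to\int r_x\dd\pii$ a.s., using $\frac1t\sigma_xB_{x,t}\to0$. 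If this limit were nonzero, $X_t$ would tend to $0$ or $\infty$ along the path, contradicting recurrence. Recurrence holds because stationarity puts positive mass on compact subsets of $E$ that are visited with positive frequency. Hence $\int r_x\dd\pii=0$.

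Given $\int r_x\dd\pii=0$,
\[
\lambdax=F(0)-\frac{\sigma_x^2}{2}\ \ge\ \int\Bigl(F(x)-\frac{\sigma_x^2}{2}\Bigr)\dd\pii=\int\Psi(x,y)\,y\,\pii(\mathrm dx,\mathrm dy).
\]
The inequality uses \eqref{eq:max-growth}. By Assumption~\ref{ass:phi}, $\Psi>0$ on $E$ and $y>0$ $\pii$-a.s., so the right side is strictly positive. Hence $\lambdax>0$.

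The resident law $\mux$ then exists by Lemma~\ref{lem:resident}, and $\lambday$ is well defined. Theorem~\ref{thm:conditional} gives $\lambday>0$. Conversely, $\lambdax>0$ and $\lambday>0$ give coexistence and uniqueness by Theorem~\ref{thm:sufficient} (equivalently Theorem~\ref{thm:conditional}).

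The main obstacle is rigorously justifying $\int r_x\dd\pii=0$ for an arbitrary interior invariant measure, without a priori integrability of $\log x$ near $0$. I would first try the ergodic-decomposition and Birkhoff argument above, reusing the moment estimates of Section~\ref{sec:estimates} for the $L^1$ bound on $r_x$. Presumably the same identity is already needed in the proof of Theorem~\ref{thm:conditional}, where positive average predation intensity is invoked, so it may be quotable from there.
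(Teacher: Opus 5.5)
Your proposal is correct and matches the paper's proof: the paper also writes $\lambdax=\int_E[F(0)-F(x)+\Psi(x,y)y]\dd\pii>0$ using $\int r_x\dd\pii=0$, and then invokes Theorem~\ref{thm:conditional}. The identity you flag as the main obstacle is Lemma~\ref{lem:log}, which the paper proves without ergodic components (stationary log increments divided by $t$ tend to zero in probability), and your ergodic-component, positive-frequency recurrence argument is an equally valid way to justify it.
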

Condition~\eqref{eq:max-growth} is satisfied by any nonincreasing prey per-capita growth satisfying Assumption~\ref{ass:F}, but monotonicity of $F$ is not needed. Theorem~\ref{thm:conditional} does not require \eqref{eq:max-growth}; it does require prey establishment as a separate hypothesis.

\begin{corollary}[Logistic growth]\label{cor:logistic}
Let $F(x)=r-cx$ with $r,c>0$, and suppose Assumptions~\ref{ass:phi} and \ref{ass:comparison} hold. Stationary coexistence is equivalent to
\begin{equation}\label{eq:logistic-criterion}
r-\sigma_x^2/2>0,\qquad
\beta\E_{\mux}\Phi(X,0)>\delta+\sigma_y^2/2,
\end{equation}
where $\mux=\operatorname{Gamma}(\alpha,\rho)$ in shape--rate notation,
\begin{equation}\label{eq:gamma}
\alpha=\frac{2r}{\sigma_x^2}-1,\qquad \rho=\frac{2c}{\sigma_x^2}.
\end{equation}
\end{corollary}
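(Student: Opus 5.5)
The corollary follows once we check the hypotheses of Theorem~\ref{thm:exact} for $F(x)=r-cx$ and compute the resident law explicitly. There are three things to do: verify Assumption~\ref{ass:F} and condition \eqref{eq:max-growth}, identify $\mux$ from \eqref{eq:resident-density}, and then read \eqref{eq:logistic-criterion} off as the pair of conditions $\lambdax>0$ and $\lambday>0$.

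For the first step, $F$ is $C^1$ with $|F(x)|\leq \max(r,c)(1+x)$. For the dissipativity bound, $xF(x)=rx-cx^2$. Completing the square, $cx^2-(r+1)x\geq -(r+1)^2/(4c)$, and hence
\begin{equation*}
rx-cx^2\leq \frac{(r+1)^2}{4c}-x .
\end{equation*}
So \eqref{eq:growth} holds with $a_F=(r+1)^2/(4c)$ and $d_F=1$. Since $c>0$, $F$ is decreasing, so $F(x)\leq F(0)=r$ and \eqref{eq:max-growth} is satisfied. Assumptions~\ref{ass:phi} and \ref{ass:comparison} are hypotheses of the corollary. Theorem~\ref{thm:exact} therefore applies: stationary coexistence holds if and only if $\lambdax=r-\sigma_x^2/2>0$ and $\lambday>0$, and in that case the interior invariant law is unique.

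For the second step, assume $\lambdax>0$ and compute the exponent in \eqref{eq:resident-density}:
\begin{equation*}
\frac{2}{\sigma_x^2}\int_1^x\Bigl(\frac{r}{u}-c\Bigr)\dd u=\frac{2r}{\sigma_x^2}\log x-\frac{2c}{\sigma_x^2}(x-1).
\end{equation*}
This gives
\begin{equation*}
p_X(x)=C' x^{2r/\sigma_x^2-2}e^{-2cx/\sigma_x^2}=C'x^{\alpha-1}e^{-\rho x},
\end{equation*}
with $\alpha,\rho$ as in \eqref{eq:gamma}. This is the $\operatorname{Gamma}(\alpha,\rho)$ density. It is integrable exactly because $\alpha>0$, which is equivalent to $\lambdax>0$. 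This matches the uniqueness of the resident law from Lemma~\ref{lem:resident}.

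For the third step, substituting into \eqref{eq:lambda-y} shows that $\lambday>0$ is exactly $\beta\E_{\mux}\Phi(X,0)>\delta+\sigma_y^2/2$. The expectation is finite because $\Phi(x,0)\leq K_\Phi x$ and the Gamma law has a finite mean $\alpha/\rho$.

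There is no real obstacle; the only care needed is to choose explicit constants in the dissipativity bound, and the normalization constant of the Gamma density is irrelevant.
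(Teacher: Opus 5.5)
Your proposal is correct and follows the paper's own proof. The paper likewise checks \eqref{eq:growth} and \eqref{eq:max-growth} for logistic $F$, substitutes into \eqref{eq:resident-density} to obtain the $\operatorname{Gamma}(\alpha,\rho)$ law, and applies Theorem~\ref{thm:exact}; your explicit constants $a_F=(r+1)^2/(4c)$, $d_F=1$ supply a detail the paper leaves implicit.
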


If $\lambdax>0$ but $\lambday=0$, Theorem~\ref{thm:conditional} excludes an interior invariant probability measure. This statement concerns stationary coexistence. It does not by itself imply almost sure convergence of $Y_t$ to zero, nor specify a critical extinction rate.

\section{Well-posedness, moments, and stationary identities}\label{sec:estimates}
\begin{proposition}\label{prop:moments}
Under Assumptions~\ref{ass:F}--\ref{ass:phi}, every initial state $z\in M$ gives a unique global strong solution of \eqref{eq:model}. The process is Feller on $M$, $E$ is invariant, and both axes are invariant. For each $p$ in \eqref{eq:p-range} there are $C_p,c_p>0$ such that
\begin{equation}\label{eq:drift-moment}
\Lcal W^p\leq C_p-c_pW^p,
\end{equation}
and
\begin{equation}\label{eq:moment}
\E_zW^p(Z_t)\leq e^{-c_pt}W^p(z)+\frac{C_p}{c_p}(1-e^{-c_pt}),\qquad t\geq0.
\end{equation}
Every invariant probability measure $\nu$ on $M$, whether interior or on the boundary, satisfies
\begin{equation}\label{eq:invariant-moment}
\int_M W^p\dd\nu\leq C_p/c_p.
\end{equation}
\end{proposition}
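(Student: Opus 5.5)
Since $\Phi=x\Psi$ with $\Psi$ bounded and locally Lipschitz, and $F$ is $C^1$, all coefficients of \eqref{eq:model} are locally Lipschitz on $M$. I will therefore obtain a unique strong solution up to the explosion time $\tau_\infty=\lim_n\tau_n$, with $\tau_n$ the exit time from $\{W\le n\}$. Because each drift and diffusion component of $X$ carries a factor $x$ (recall $\Phi(0,y)=0$), and each component of $Y$ carries a factor $y$, both axes are invariant by pathwise uniqueness. Invariance of $E$ then follows from the explicit representations
\[
X_t=X_0\exp\Bigl(\int_0^t r_x(Z_s)\dd s+\sigma_xB_{x,t}\Bigr),
\]
and similarly for $Y$, valid before $\tau_\infty$. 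Nonexplosion and the moment bounds both come from the Lyapunov inequality \eqref{eq:drift-moment}. The Feller property then follows from continuous dependence on initial data for locally Lipschitz SDEs combined with this nonexplosion.

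The core step is \eqref{eq:drift-moment}. Using \eqref{eq:balance-intro}, $\Lcal W=xF(x)-\delta y/\beta$, and the second-order terms
\[
\Lcal W^p=pW^{p-1}\Lcal W+\tfrac12p(p-1)W^{p-2}\bigl(\sigma_x^2x^2+\sigma_y^2y^2/\beta^2\bigr).
\]
By \eqref{eq:growth}, $xF(x)-\delta y/\beta\le a_F-d_Fx-\delta y/\beta\le a_F+d-d(x+y/\beta)$. Also $\sigma_x^2x^2+\sigma_y^2y^2/\beta^2\le\sig(x+y/\beta)^2$. Writing $u=x+y/\beta=W-1$, I get
\[
\Lcal W^p\le pW^{p-2}\Bigl[(a_F+d)W-d\,uW+\tfrac12(p-1)\sig u^2\Bigr].
\]
Since $u\le W$ and $u^2\le uW$, the bracket is at most $(a_F+d)W-\bigl(d-\tfrac12(p-1)\sig\bigr)uW$. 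The coefficient $\kappa:=d-\tfrac12(p-1)\sig$ is positive exactly under \eqref{eq:p-range}. Substituting $u=W-1$ gives
\[
\Lcal W^p\le p(a_F+d+\kappa)W^{p-1}-p\kappa W^p.
\]
Young's inequality absorbs the $W^{p-1}$ term, yielding $C_p-c_pW^p$ with, for example, $c_p=p\kappa/2$.

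Given \eqref{eq:drift-moment}, I apply Itô's formula to $e^{c_pt}W^p(Z_{t\wedge\tau_n})$. The stochastic integral is a martingale up to the localizing time $\tau_n$, so
\[
\E_z e^{c_p(t\wedge\tau_n)}W^p(Z_{t\wedge\tau_n})\le W^p(z)+\tfrac{C_p}{c_p}\bigl(e^{c_pt}-1\bigr).
\]
Since $W^p\ge n^p$ on $\{\tau_n\le t\}$, this gives $\Prob_z(\tau_n\le t)\to0$, hence nonexplosion. Fatou's lemma as $n\to\infty$ then gives \eqref{eq:moment}.

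For \eqref{eq:invariant-moment}, I take an invariant $\nu$ and integrate \eqref{eq:moment} with $W^p$ replaced by $W^p\wedge N$. The truncation is needed because $\nu$ is not yet known to integrate $W^p$. By concavity and monotonicity, $\E_z(W^p\wedge N)\le (e^{-c_pt}W^p(z)+C_p/c_p)\wedge N$. Integrating against $\nu$ and using invariance gives
\[
\int (W^p\wedge N)\dd\nu\le\int\bigl[(e^{-c_pt}W^p+C_p/c_p)\wedge N\bigr]\dd\nu.
\]
Letting $t\to\infty$ with dominated convergence (the integrand is bounded by $N$), then $N\to\infty$ with monotone convergence, yields $\int W^p\dd\nu\le C_p/c_p$. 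The main obstacle is the algebra in the Lyapunov step: both the cross term $u^2$ and the constant must be controlled by $-\kappa W^p$ with the sharp threshold \eqref{eq:p-range}. I expect the bounds $u\le W$ and $u^2\le uW$ to handle this. The truncation argument is routine but must be carried out, since integrability under $\nu$ is not assumed in advance.
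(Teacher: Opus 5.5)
Your proof is correct. Well-posedness, axis invariance, the Lyapunov inequality, and the integrating-factor localization with Fatou follow the paper's route. In the drift step you reach the same coefficient $\kappa=d-\tfrac12(p-1)\sig$. The paper gets there more directly from $\Lcal W\le a_F+d-dW$ (which equals $a_F-du$, so your bound $a_F+d-du$ is looser by $d$, harmlessly) and $\sigma_x^2x^2+\sigma_y^2y^2/\beta^2\le\sig W^2$.

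The genuine difference is the proof of \eqref{eq:invariant-moment}. The paper argues at the generator level. It takes concave cutoffs $\vartheta_n$ with $\vartheta_n'=1$ on $[0,n]$ and $\vartheta_n'=0$ on $[2n,\infty)$. Stationarity then gives $0=\int\Lcal\vartheta_n(W^p)\dd\nu\le\int\vartheta_n'(W^p)(C_p-c_pW^p)\dd\nu$, and Fatou finishes. You instead integrate the already established transient bound \eqref{eq:moment}, truncated at level $N$, against $\nu$, using only $\nu P_t=\nu$ on bounded functions. You then let $t\to\infty$ by dominated convergence and $N\to\infty$ by monotone convergence.

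Your route is more elementary. It needs no further It\^o computation under the stationary law, and no justification that $\int\Lcal g\dd\nu=0$ for the cutoff test functions. It also shows that a simple truncation suffices to pass from deterministic initial data to every invariant law. The paper's route works directly from \eqref{eq:drift-moment} and global existence at the stationary level, without passing through the time-dependent estimate or a $t\to\infty$ limit.
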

\begin{proof}
The coefficients are locally Lipschitz on $M$ and extend locally to Euclidean neighborhoods. Local SDE theory, as presented in \cite{Mao}, gives a unique local strong solution up to explosion. Multiplicative coordinates and pathwise uniqueness preserve the axes. For positive coordinates, the logarithmic equations are
\begin{equation}\label{eq:log-eqs}
\log X_t=\log x+\int_0^t r_x(Z_s)\,\mathrm ds+\sigma_xB_{x,t},\qquad
\log Y_t=\log y+\int_0^t r_y(Z_s)\,\mathrm ds+\sigma_yB_{y,t}.
\end{equation}
On every bounded region the integrands are bounded, so no coordinate reaches zero in finite time before explosion.

The balance \eqref{eq:balance-intro} gives $\Lcal W\leq a_F+d-dW$. Since
\[
\sigma_x^2x^2+\sigma_y^2y^2/\beta^2\leq\sig W^2,
\]
It\^o's formula yields
\[
\Lcal W^p\leq p(a_F+d)W^{p-1}
-p\left(d-\frac{p-1}{2}\sig\right)W^p.
\]
The bracket is positive. Absorbing $W^{p-1}$ into a small multiple of $W^p$ proves \eqref{eq:drift-moment}.

For a rigorous localization, set $U=W^p$ and $\tau_n=\inf\{t:W(Z_t)\geq n\}$, taking $n>W(z)$. The weaker inequality $\Lcal U\leq C_p$ gives
\[
\E_zU(Z_{t\wedge\tau_n})\leq U(z)+C_pt,
\qquad n^p\Prob_z(\tau_n\leq t)\leq U(z)+C_pt.
\]
Consequently explosion cannot occur. Applying It\^o to $e^{c_ps}U(Z_s)$ up to $t\wedge\tau_n$ gives
\[
\E_z[e^{c_p(t\wedge\tau_n)}U(Z_{t\wedge\tau_n})]
\leq U(z)+\frac{C_p}{c_p}(e^{c_pt}-1).
\]
Fatou's lemma as $n\to\infty$ proves \eqref{eq:moment}. This argument keeps the stopped time in the integrating factor; it does not replace an integral up to $t\wedge\tau_n$ by an unstopped differential inequality. Continuous dependence before localization, together with the exit estimate uniformly on compact initial sets, gives the Feller property.

It remains to prove \eqref{eq:invariant-moment} without assuming the desired moment. Choose smooth increasing concave $\vartheta_n$ on $[0,\infty)$ with $0\leq\vartheta_n'\leq1$, $\vartheta_n'=1$ on $[0,n]$, and $\vartheta_n'=0$ on $[2n,\infty)$. For instance take $\vartheta_n'(u)=\eta(u/n)$ for a smooth nonincreasing cutoff $\eta$. The function $\vartheta_n(U)$ is bounded; its generator and diffusion coefficient vanish outside a compact set and are bounded. Stationarity and It\^o therefore give
\[
0=\int\Lcal\vartheta_n(U)\dd\nu
\leq\int\vartheta_n'(U)(C_p-c_pU)\dd\nu.
\]
Thus $c_p\int\vartheta_n'(U)U\dd\nu\leq C_p$. Since $\vartheta_n'(U)\to1$ pointwise, Fatou proves \eqref{eq:invariant-moment}.
\end{proof}

\begin{lemma}[Stationary logarithmic identities]\label{lem:log}
Every invariant probability measure $\pi$ with $\pi(E)=1$ satisfies
\begin{equation}\label{eq:stationary-log}
\int r_x\dd\pi=\int r_y\dd\pi=0.
\end{equation}
No integrability of $\log x$ or $\log y$ is required.
\end{lemma}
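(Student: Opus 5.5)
We prove the identity for $r_y$; the argument for $r_x$ is identical with $B_x$ and $\sigma_x$. The difficulty is that $\log y$ need not be $\pi$-integrable, so we cannot simply integrate \eqref{eq:log-eqs} against $\pi$. Instead we use a bounded truncation of the logarithm. Fix a smooth odd nondecreasing function $h:\R\to\R$ with $h(u)=u$ on $[-1,1]$, $h'$ bounded, and $h$, $h'$, $h''$ bounded. Put $h_n(u)=n\,h(u/n)$ and $g_n(z)=h_n(\log y)$ on $E$. Then $g_n$ is bounded with $|g_n|\leq Cn$, and it is smooth on $E$.

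By It\^o's formula applied to $\log Y_t$,
\[
\Lcal g_n=h_n'(\log y)\,r_y+\tfrac12\sigma_y^2h_n''(\log y).
\]
Here $|h_n'|\leq C$, $|h_n''|\leq C/n$, and $h_n'\to1$ pointwise as $n\to\infty$.

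\textbf{Step 1: stationarity kills the generator term.} We first show that $\int\Lcal g_n\dd\pi=0$. Start the process from $\pi$ restricted to $E$; by Proposition~\ref{prop:moments}, $E$ is invariant. Write
\[
g_n(Z_t)-g_n(Z_0)=\int_0^t\Lcal g_n(Z_s)\dd s+\int_0^t\sigma_yh_n'(\log Y_s)\dd B_{y,s}.
\]
The stochastic integral has a bounded integrand, so it is a true martingale. Taking expectations under stationarity, the left side has mean zero because $g_n$ is bounded. Next, $|\Lcal g_n|\leq C(1+|r_y|)\leq C'W$, and $W\in L^1(\pi)$ by \eqref{eq:invariant-moment}. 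Fubini then gives $t\int\Lcal g_n\dd\pi=0$.

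\textbf{Step 2: pass to the limit.} We have
\[
0=\int h_n'(\log y)\,r_y\dd\pi+O(1/n).
\]
The integrand is dominated by $C W\in L^1(\pi)$ and converges pointwise on $E$ to $r_y$, since $\pi(E)=1$. Dominated convergence therefore yields $\int r_y\dd\pi=0$. Here $r_y$ is integrable because $|r_y|\leq CW$. No moment of $\log y$ entered anywhere, because $g_n$ was bounded at every finite $n$.

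\textbf{Main obstacle.} The delicate step is Step 1, namely justifying stationarity for a test function that is bounded but whose generator is unbounded. The difficulty is removed by three facts: the local-martingale term has a bounded integrand, $\Lcal g_n$ is controlled by $W$, and the first moment of $W$ under $\pi$ is finite by Proposition~\ref{prop:moments}. No compact-support cutoff is needed.
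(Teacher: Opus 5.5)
Your proof is correct, but it takes a different route from the paper's.

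\textbf{The paper's route.} The paper starts the process from $\pi$ and argues pathwise. By stationarity, $\log X_t$ has the same law as the almost surely finite $\log X_0$, so $(\log X_t-\log X_0)/t\to0$ in probability. Together with $B_t/t\to0$ and \eqref{eq:log-eqs}, this forces the almost sure ergodic-theorem limit of $t^{-1}\int_0^t r_i(Z_s)\dd s$, namely $\E_\pi[r_i(Z_0)\mid\mathcal I]$ with $\mathcal I$ the invariant $\sigma$-algebra, to vanish. Taking expectations then gives \eqref{eq:stationary-log}.

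\textbf{Your route.} You use the generator identity $\int\Lcal g_n\dd\pi=0$ for the bounded truncated logarithms $g_n=h_n(\log y)$. You justify it with three facts: the martingale part has a bounded integrand, $|\Lcal g_n|\le CW$, and $\int W\dd\pi<\infty$ by \eqref{eq:invariant-moment}. You then let $n\to\infty$ by dominated convergence, using $h_n'(\log y)\to1$ pointwise and $\|h_n''\|_\infty=O(1/n)$. Each step checks out, including the Fubini step and the fact that $Z_t\in E$ for all $t$ when starting from $\pi$.

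\textbf{Comparison.} Your argument avoids the ergodic theorem, the invariant $\sigma$-algebra, and the convergence-in-probability trick. It is essentially the same mechanism as the paper's own concave-cutoff proof of \eqref{eq:invariant-moment}, so it is more elementary and self-contained. Both routes need the same integrability input, namely $r_i\in L^1(\pi)$ via $|r_i|\le CW$.

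The paper's route yields slightly more. Under $\Prob_\pi$, the long-run average growth rates $t^{-1}\int_0^t r_i(Z_s)\dd s$ converge to zero almost surely, without assuming ergodicity; this is the pathwise form natural in persistence theory. Your argument delivers only the identity for the $\pi$-mean, which is all the lemma asserts.

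A minor remark: oddness and monotonicity of $h$ are never used. Only boundedness of $h,h',h''$ and $h'=1$ near $0$ matter.
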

\begin{proof}
Proposition~\ref{prop:moments} and $|r_i|\leq CW$ imply $r_i\in L^1(\pi)$. Start the process with law $\pi$. For any $\varepsilon>0$, stationarity gives
\[
\Prob\left(\frac{|\log X_t-\log X_0|}{t}>\varepsilon\right)
\leq 2\Prob(|\log X_0|>\varepsilon t/2)\longrightarrow0.
\]
The random variable $\log X_0$ is finite almost surely even if its expectation does not exist. The same argument applies to $Y$. Brownian motion divided by $t$ tends to zero almost surely. By the stationary ergodic theorem, $t^{-1}\int_0^t r_i(Z_s)\,\mathrm ds$ converges almost surely to a conditional expectation with respect to the invariant sigma-algebra. Equation~\eqref{eq:log-eqs} shows that this limit is also zero in probability; it must therefore equal zero almost surely. Taking expectations proves \eqref{eq:stationary-log}. This works without assuming ergodicity of $\pi$.
\end{proof}

\begin{lemma}[Resident law and boundary measures]\label{lem:resident}
Suppose $\lambdax>0$. Equation~\eqref{eq:resident} has the unique invariant law \eqref{eq:resident-density} on $(0,\infty)$, and its transition law from every positive initial state converges to $\mux$. Moreover
\begin{equation}\label{eq:resident-log}
\int F(x)\mux(\mathrm dx)=\sigma_x^2/2.
\end{equation}
Every invariant law on $M_0$ has the form
\begin{equation}\label{eq:boundary-mixture}
\nu_\theta=\theta\delta_{(0,0)}+(1-\theta)(\mux\otimes\delta_0),\qquad 0\leq\theta\leq1.
\end{equation}
\end{lemma}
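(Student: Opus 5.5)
The proof splits into three parts: the one-dimensional resident diffusion, the identity \eqref{eq:resident-log}, and the classification of boundary invariant laws.

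\emph{Resident diffusion.} I treat \eqref{eq:resident} with the standard Feller scale and speed-measure theory on $(0,\infty)$. The scale density is
\[
s(x)=\exp\Bigl(-\tfrac{2}{\sigma_x^2}\int_1^x F(u)/u\,\mathrm du\Bigr),
\]
and the speed density is $m=2/(\sigma_x^2x^2s)$, which is exactly \eqref{eq:resident-density} up to normalisation.

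Near $0$, $F(u)=F(0)+O(u)$. Hence $s(x)\sim c\,x^{-2F(0)/\sigma_x^2}$ with exponent less than $-1$, because $\lambdax>0$. So $\int_0 s=\infty$, and $0$ is non-attracting. Also $m(x)\sim c\,x^{2F(0)/\sigma_x^2-2}$ with exponent greater than $-1$, so $m$ is integrable near $0$.

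Near $\infty$, \eqref{eq:growth} gives $F(u)/u\le a_F/u^2-d_F/u$. Hence $s(x)\ge c\,x^{2d_F/\sigma_x^2}$, so $\int^\infty s=\infty$. Likewise $m(x)\le C x^{-2-2d_F/\sigma_x^2}$, which is integrable.

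Thus both endpoints are natural or entrance-type non-attracting boundaries, and the speed measure is finite. The diffusion is therefore positive recurrent with the unique invariant law $\mux$. Convergence in total variation of the transition law from each $x>0$ follows from the standard ergodic theorem for regular positive-recurrent one-dimensional diffusions.

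\emph{Identity \eqref{eq:resident-log}.} I obtain this as the one-dimensional analogue of Lemma~\ref{lem:log}. I apply the same ergodic/log argument to $\log\overline X_t$ under stationary start. Integrability of $F$ comes from $|F|\le C_F(1+x)$ and the moment bound \eqref{eq:invariant-moment} restricted to the axis. The result is $\int (F-\sigma_x^2/2)\,\mathrm d\mux=0$.

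\emph{Boundary invariant laws.} Let $\nu$ be invariant on $M_0$. The sets $\{x=0\}$ and $\{y=0\}$ are invariant, so I decompose $\nu$ on three pieces: the origin, $(0,\infty)\times\{0\}$, and $\{0\}\times(0,\infty)$.

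On the predator axis, $\mathrm dY=Y(-\delta)\,\mathrm dt+\sigma_yY\,\mathrm dB$, because $\Phi(0,y)=0$. Then $\log Y_t\to-\infty$ almost surely, so $Y_t\to0$ from every point. Any invariant mass $\mu$ on the open predator axis would give, for every compact $K$ in that axis, $\mu(K)=\int P_t(z,K)\,\mu(\mathrm dz)\to0$ by dominated convergence. Hence that mass is zero.

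The restriction to the open prey axis is invariant for \eqref{eq:resident}, so by uniqueness it is a multiple of $\mux$. The remaining mass sits at the fixed point $(0,0)$. This yields \eqref{eq:boundary-mixture}.

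\emph{Main obstacle.} The delicate step is the boundary classification at $0$ for the resident diffusion, that is, verifying the integrability conditions on scale and speed with only the $C^1$ and dissipativity hypotheses. I handle it with the local expansion $F(u)=F(0)+O(u)$ near $0$ and the bound \eqref{eq:growth} near $\infty$, as above.
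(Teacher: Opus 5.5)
Your proposal is correct and follows essentially the same route as the paper: scale/speed analysis of the resident with the same density asymptotics at $0$ and $\infty$, the one-dimensional version of the stationary logarithmic argument for \eqref{eq:resident-log}, and decomposition of a boundary law over the origin and the two invariant open axes. The differences are cosmetic. You check non-attraction of both endpoints explicitly through the scale density, where the paper defers this to the cited recurrence theory. You also eliminate predator-axis mass via $P_t(z,K)\to0$ on compacts, where the paper uses the test function $e^{-y}$.
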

\begin{proof}
The one-dimensional speed density is proportional to \eqref{eq:resident-density}. Since $F$ is $C^1$ at zero, its behavior there is $p_X(x)\asymp x^{2F(0)/\sigma_x^2-2}$, integrable precisely when $\lambdax>0$. For large $x$, \eqref{eq:growth} gives $F(x)/x\leq a_F/x^2-d_F/x$ and hence
\[
p_X(x)\leq Cx^{-2-2d_F/\sigma_x^2}.
\]
The speed measure is finite and has a finite first moment. The diffusion is conservative and regular on $(0,\infty)$, with positive diffusion coefficient. The one-dimensional positive-recurrence theorem gives uniqueness and convergence to its normalized speed measure; see the recurrence and ergodicity theory in \cite{Khasminskii}. Alternatively these follow in logarithmic coordinates from regular irreducible diffusion theory. Applying the argument of Lemma~\ref{lem:log} to the resident gives \eqref{eq:resident-log}; its required moment follows either from the density bound or the scalar version of Proposition~\ref{prop:moments}.

On the predator axis, $Y_t=y\exp(-D_yt+\sigma_y B_{y,t})\to0$ almost surely. There is no invariant probability supported on $(0,\infty)$ on that axis: invariance of $e^{-y}$ and dominated convergence would force its expectation to be one. The two open axes and the origin are disjoint invariant sets. Restricting a boundary invariant law to them leaves only the prey-axis invariant law and the origin, proving \eqref{eq:boundary-mixture}.
\end{proof}

\section{Direct construction of the coexistence law}\label{sec:existence}
\begin{proof}[Proof of Theorem~\ref{thm:sufficient}]
Fix $z\in E$, and let
\[
\overline\Pi_T^z=\frac1T\int_0^T P_t(z,\cdot)\,\mathrm dt
\]
be the expected occupation measure on $M$. For $p$ in \eqref{eq:p-range}, Proposition~\ref{prop:moments} gives $\sup_{T\geq1}\int W^p\dd\overline\Pi_T^z<\infty$. The sublevel sets of $W$ are compact in $M$, so these measures are tight. Choose a weak limit $\pi$ along $T_n\to\infty$. For $f\in C_b(M)$ and $s>0$,
\[
\left|\int P_sf\dd\overline\Pi_T^z-\int f\dd\overline\Pi_T^z\right|
\leq\frac{2s\|f\|_\infty}{T}.
\]
The Feller property permits passage to the weak limit. Thus $\pi$ is invariant.

For each finite $T$, \eqref{eq:log-eqs} can be averaged: $\E|\log X_T|$ and $\E|\log Y_T|$ are finite because the initial point is fixed and $\E\int_0^T|r_i(Z_s)|\,\mathrm ds<\infty$. Moreover $\log u\leq u$ and the uniform first moments give
\begin{equation}\label{eq:rates-limit}
\limsup_{T\to\infty}\int r_i\dd\overline\Pi_T^z
=\limsup_{T\to\infty}\frac{\E_z\log Z_{i,T}-\log z_i}{T}\leq0.
\end{equation}
Since $|r_i|\leq CW$ and $p>1$, the rates are uniformly integrable along these measures. Continuity and weak convergence imply
\begin{equation}\label{eq:nonpositive}
\int r_x\dd\pi\leq0,\qquad\int r_y\dd\pi\leq0.
\end{equation}

If $\pi(M_0)=1$, Lemma~\ref{lem:resident} gives $\pi=\nu_\theta$. Equation~\eqref{eq:resident-log} yields $\nu_\theta r_x=\theta\lambdax$. This is positive if $\theta>0$. If $\theta=0$, then $\nu_0r_y=\lambday>0$. Either possibility contradicts \eqref{eq:nonpositive}. Hence $\pi(E)>0$. Both $E$ and $M_0$ are invariant, with no transitions between them, so the normalized restriction $\pi(\cdot\cap E)/\pi(E)$ is an interior invariant probability. The moment conclusion for every admissible $p$ follows from \eqref{eq:invariant-moment}.

For uniqueness, apply $T(x,y)=(\log x,\log y)$. The transformed drift is locally Lipschitz on $\R^2$ and the covariance matrix is the positive definite constant matrix $\operatorname{diag}(\sigma_x^2,\sigma_y^2)$. Local elliptic regularity, localization, and nonexplosion give the strong Feller property. Every nonempty open set is reached with positive probability at every positive time: localize along a continuous path joining the initial point to that set, and use nondegenerate Brownian support, or the positive killed transition kernel on a bounded connected neighborhood of the path. Thus the transformed process is topologically irreducible. The strong Feller--irreducibility uniqueness criterion (see \cite{DPZ}) implies at most one invariant probability. The homeomorphism $T$ transfers uniqueness to $E$.
\end{proof}

\begin{remark}
The preceding direct construction first finds an invariant measure with positive interior mass and then normalizes its restriction. Uniqueness alone does not rule out a boundary component in an arbitrary expected occupation limit. Appendix~\ref{app:benaim} supplies the stronger pathwise persistence statement and convergence of the full empirical measures.
\end{remark}

\section{Stationary comparison and necessity}\label{sec:necessity}
\begin{lemma}[Pathwise prey comparison]\label{lem:comparison}
Let $(X,Y)$ solve \eqref{eq:model}, and let $\overline X$ solve \eqref{eq:resident} with the same $B_x$ and $\overline X_0=X_0>0$. Then
\begin{equation}\label{eq:path-order}
X_t\leq\overline X_t\quad\text{for all }t\geq0\quad\text{almost surely}.
\end{equation}
This statement uses only Assumptions~\ref{ass:F}--\ref{ass:phi}.
\end{lemma}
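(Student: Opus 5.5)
We compare the two processes in logarithmic coordinates, where the multiplicative noise cancels. By Proposition~\ref{prop:moments} both axes and $E$ are invariant, so $X_t>0$ for all $t$. For the resident \eqref{eq:resident}, the scalar version of the same argument shows $\overline X_t>0$ for all $t$. Set $U_t=\log X_t$ and $\overline U_t=\log\overline X_t$. By \eqref{eq:log-eqs},
\[
U_t=\log X_0+\int_0^t\bigl[F(X_s)-\Psi(X_s,Y_s)Y_s-\sigma_x^2/2\bigr]\dd s+\sigma_xB_{x,t},
\]
\[
\overline U_t=\log X_0+\int_0^t\bigl[F(\overline X_s)-\sigma_x^2/2\bigr]\dd s+\sigma_xB_{x,t}.
\]
Since both use the same $B_x$, the difference $D_t=U_t-\overline U_t$ is absolutely continuous, with $D_0=0$ and
\[
D_t'=F(X_t)-F(\overline X_t)-\Psi(X_t,Y_t)Y_t\leq F(e^{U_t})-F(e^{\overline U_t}),
\]
because $\Psi\geq0$ and $Y\geq0$ by Assumption~\ref{ass:phi}. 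The problem thus reduces to a pathwise deterministic comparison for an ODE inequality with continuous coefficients.

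The next step is to exclude $D$ becoming positive. Fix $\omega$ and a horizon $T$. On $[0,T]$ the paths $X,\overline X$ are continuous and positive, so they lie in a compact interval $[m,R]\subset(0,\infty)$. Since $F$ is $C^1$, the map $u\mapsto F(e^u)$ is Lipschitz on $[\log m,\log R]$ with some constant $L=L(\omega,T)$. Hence $D_t'\leq L|D_t|$. Suppose $D_{t_1}>0$ for some $t_1\le T$, and let $t_0=\sup\{t<t_1:D_t\leq0\}$. Then $D_{t_0}=0$ and $D>0$ on $(t_0,t_1]$. On that interval $D'\leq LD$, so Gronwall's inequality gives $D_t\leq D_{t_0}e^{L(t-t_0)}=0$. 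This is a contradiction. Therefore $D_t\leq0$ on $[0,T]$ for every $T$, which gives $X_t\leq\overline X_t$ for all $t$ on a full-measure event: the event where both solutions exist globally, are positive, and satisfy \eqref{eq:log-eqs}. Taking the union over rational $T$ handles all times simultaneously.

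I expect the main obstacle to be the justification that the noise cancels exactly. This requires the strong solutions to be driven by the same $B_x$ on the same filtered space, and the logarithmic representations to hold globally, which uses nonexplosion and positivity from Proposition~\ref{prop:moments}. Everything else is deterministic. The argument needs no monotonicity of $F$ and no comparison structure, only $\Psi\geq0$ and local Lipschitz continuity of $F$; this is consistent with the claim that only Assumptions~\ref{ass:F}--\ref{ass:phi} are used.
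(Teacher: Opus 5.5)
Your proof is correct and follows essentially the same route as the paper: the Brownian terms cancel in the logarithmic difference $D_t$, a local Lipschitz bound on $F$ applies where $D_t>0$, and Gronwall with $D_0=0$ finishes. The only difference is cosmetic: you localize pathwise on an $\omega$-dependent interval $[m,R]$ using the Lipschitz constant of $u\mapsto F(e^u)$, whereas the paper stops the processes at level $R$ and uses $X_t-\overline X_t\leq R\,D_t$ on $\{D_t>0\}$, which needs no positive lower bound on the paths.
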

\begin{proof}
The Brownian terms cancel in the logarithmic difference $D_t=\log X_t-\log\overline X_t$. It is absolutely continuous, with
\[
\dot D_t=F(X_t)-F(\overline X_t)-\Psi(X_t,Y_t)Y_t
\quad\text{for almost every }t.
\]
Stop when $X$, $Y$, or $\overline X$ exceeds $R$. On $\{D_t>0\}$, local Lipschitz continuity gives $\dot D_t\leq L_R(X_t-\overline X_t)\leq L_RR D_t$. Therefore $(D_t^+)'\leq L_RR D_t^+$ almost everywhere before the stop. Since $D_0=0$, Gronwall implies $D_t^+=0$. Nonexplosion of the full and scalar systems allows $R\to\infty$, giving \eqref{eq:path-order}. In particular, no monotonicity of $F$ is needed.
\end{proof}

\begin{lemma}[Strict ordering of stationary prey laws]\label{lem:strict}
Suppose $\lambdax>0$ and an interior invariant law $\pi$ exists. Its prey marginal satisfies
\begin{equation}\label{eq:marginal-order}
\pi_X\st\mux,\qquad\pi_X\neq\mux.
\end{equation}
Consequently, if $g$ is strictly increasing and integrable for both laws, then
\begin{equation}\label{eq:strict-average}
\int g\dd\pi_X<\int g\dd\mux.
\end{equation}
\end{lemma}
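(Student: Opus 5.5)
Start the full system from $\pi$, independent of nothing else, and couple it with the resident $\overline X$ driven by the same $B_x$ with $\overline X_0=X_0$. Since $\pi(E)=1$, $X_0>0$ almost surely, so Lemma~\ref{lem:comparison} gives $X_t\leq\overline X_t$ for all $t$. Stationarity of $\pi$ gives $X_t\sim\pi_X$ for every $t$. Conditionally on $X_0=x>0$, Lemma~\ref{lem:resident} says the law of $\overline X_t$ converges weakly to $\mux$. Hence for any bounded continuous nondecreasing $h$,
\[
\int h\dd\pi_X=\E h(X_t)\leq\E h(\overline X_t)\longrightarrow\int h\dd\mux
\]
by dominated convergence over $X_0$. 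Applying this to continuous approximations of indicators $\mathbf 1_{(a,\infty)}$ yields $\pi_X\st\mux$.

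To show $\pi_X\neq\mux$, I would use the stationary logarithmic identities. Lemma~\ref{lem:log} gives
\[
\int\bigl(F(x)-\Psi(x,y)y\bigr)\dd\pi=\sigma_x^2/2.
\]
Equation~\eqref{eq:resident-log} gives $\int F\dd\mux=\sigma_x^2/2$. Both integrals are finite by the moment bounds and $|F|\leq C_F(1+x)$. If $\pi_X=\mux$, subtracting the two identities gives $\int\Psi(x,y)y\dd\pi=0$. This is impossible: $\Psi>0$ on $E$ by Assumption~\ref{ass:phi}, and $\pi(E)=1$, so the integrand is strictly positive $\pi$-almost surely. This step is where interior stationarity enters. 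It is what forces strictness even when $F$ is not monotone.

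For \eqref{eq:strict-average}, I would use the quantile coupling. Let $G_1,G_2$ be the distribution functions of $\pi_X,\mux$ and $U$ uniform. Set $\xi_1=G_1^{-1}(U)$ and $\xi_2=G_2^{-1}(U)$. Stochastic order gives $\xi_1\leq\xi_2$ almost surely. Since the laws differ, $\Prob(\xi_1<\xi_2)>0$. Strict monotonicity of $g$ then gives $g(\xi_1)\leq g(\xi_2)$, with strict inequality on a set of positive probability. Integrability lets us take expectations, which yields the strict inequality.

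The main obstacle I expect is the first step: passing from a pathwise comparison at fixed time to the comparison of stationary laws. This needs weak convergence of the resident from a random positive initial point, combined with the fact that $\pi_X$ has no atom at $0$. Lemma~\ref{lem:resident} supplies the pointwise convergence and $\pi(E)=1$ excludes the atom, so dominated convergence should suffice.
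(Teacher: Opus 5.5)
Your proposal is correct and follows essentially the same argument as the paper. Both use the same three steps. First, stochastic order comes from the coupled pathwise comparison together with convergence of the resident from positive initial states, integrated over $X_0$. Second, equality of the laws is ruled out by comparing the stationary prey identity with $\int F\dd\mux=\sigma_x^2/2$, which would force $\int\Psi(x,y)y\dd\pi=0$. Third, the strict average inequality follows from the quantile coupling.
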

\begin{proof}
Start the full system in $\pi$, independently of future Brownian increments, and couple $\overline X_0=X_0$. For bounded continuous increasing $h$, Lemma~\ref{lem:comparison} gives
\[
\int h\dd\pi_X=\E h(X_t)\leq\E h(\overline X_t).
\]
For a deterministic positive resident initial state, the right-hand transition law converges to $\mux$ by Lemma~\ref{lem:resident}. Integrating over the random $X_0$ and using bounded convergence proves the same conclusion for this initial law, and hence $\pi_X\st\mux$.

If the two laws coincided, Lemma~\ref{lem:log} and \eqref{eq:resident-log} would give
\[
0=\int r_x\dd\pi
=\int F\dd\mux-\sigma_x^2/2-\int\Psi(x,y)y\dd\pi
=-\int\Psi(x,y)y\dd\pi<0.
\]
The strict inequality holds since $\Psi y>0$ on $E$, and the integrals are finite by Proposition~\ref{prop:moments}. This contradiction proves distinctness. An ordered quantile coupling $(A,B)$ has $A\leq B$ almost surely and, because their laws differ, $\Prob(A<B)>0$. Strict monotonicity of $g$ yields $g(A)\leq g(B)$, strictly on an event of positive probability. Integrability gives \eqref{eq:strict-average}. This argument applies equally to bounded strictly increasing $g$.
\end{proof}

\begin{proof}[Proof of Theorem~\ref{thm:conditional}]
Sufficiency and uniqueness follow from Theorem~\ref{thm:sufficient}. For necessity, apply \eqref{eq:strict-average} with $g(x)=\Phi(x,0)$, which is integrable because $0\leq g(x)\leq K_\Phi x$. The predator identity and \eqref{eq:comparison} give
\[
D_y=\beta\int_E\Phi(x,y)\dd\pi
\leq\beta\int\Phi(x,0)\dd\pi_X
<\beta\int\Phi(x,0)\dd\mux.
\]
This is exactly $\lambday>0$.
\end{proof}
\begin{proof}[Proof of Theorem~\ref{thm:exact}]
It remains to infer prey establishment from an interior invariant law. By Lemma~\ref{lem:log},
\[
\lambdax=F(0)-\sigma_x^2/2
=\int_E[F(0)-F(x)+\Psi(x,y)y]\dd\pi>0.
\]
Here $F(0)-F(x)\geq0$ by \eqref{eq:max-growth}, and $\Psi(x,y)y>0$ on $E$. Theorem~\ref{thm:conditional} completes the proof.
\end{proof}
\begin{proof}[Proof of Corollary~\ref{cor:logistic}]
Logistic $F$ satisfies \eqref{eq:growth} and \eqref{eq:max-growth}. Substitution in \eqref{eq:resident-density} gives the Gamma density with parameters \eqref{eq:gamma}. Apply Theorem~\ref{thm:exact}.
\end{proof}

\section{Functional responses and nonlogistic growth}\label{sec:applications}
\subsection{Prey-dependent responses}
If $\Phi(x,y)=\varphi(x)$, condition~\eqref{eq:comparison} reduces to strict increase of $\varphi$. Under logistic prey growth,
\begin{equation}\label{eq:gamma-mean}
\E_{\mux}X=\frac{r-\sigma_x^2/2}{c}.
\end{equation}
The classical examples in Table~\ref{tab:responses} satisfy Assumption~\ref{ass:phi}. For Holling III, $\Psi(x)=ax/(1+bx^2)$ is bounded and locally Lipschitz. For Ivlev, $\Psi(x)=a(1-e^{-bx})/x$ extends smoothly at zero with value $ab$. These facts check the quotient regularity, not merely the upper bound on $\Phi$.

\begin{table}[t]
\centering\small
\caption{Admissible responses; all displayed parameters are positive}\label{tab:responses}
\begin{tabular}{lll}
\toprule Response & $\Phi(x,y)$ & $\Phi(x,0)$\\\midrule
Holling I & $qx$ & $qx$\\[3pt]
Holling II & $ax/(1+bx)$ & $ax/(1+bx)$\\[3pt]
Holling III & $ax^2/(1+bx^2)$ & $ax^2/(1+bx^2)$\\[3pt]
Ivlev & $a(1-e^{-bx})$ & $a(1-e^{-bx})$\\[3pt]
Beddington--DeAngelis & $ax/(1+bx+\eta y)$ & $ax/(1+bx)$\\[3pt]
Crowley--Martin type & $ax/[(1+bx)(1+\eta y)]$ & $ax/(1+bx)$\\\bottomrule
\end{tabular}
\end{table}

For $\varphi(x)=qx$, the logistic criterion is
\begin{equation}\label{eq:linear}
\lambday=\frac{\beta q}{c}(r-\sigma_x^2/2)-D_y>0.
\end{equation}
This is the known two-species food-chain threshold \cite{HNfood}. The stationary logarithmic identities recover its familiar means:
\begin{equation}\label{eq:linear-means}
\int X\dd\pii=\frac{D_y}{\beta q},\qquad
\int Y\dd\pii=\frac{\lambday}{\beta q^2}c.
\end{equation}
Indeed the prey balance is $q\E Y=r-\sigma_x^2/2-c\E X$, so the second expression follows from \eqref{eq:linear}. These formulas are recoveries, not new results.

For Holling II, the logistic feeding expectation is
\begin{equation}\label{eq:holling-integral}
m_\Phi=\frac{a\rho^\alpha}{\Gamma(\alpha)}
\int_0^\infty\frac{x^\alpha e^{-\rho x}}{1+bx}\,\mathrm dx,
\qquad\lambday=\beta m_\Phi-D_y.
\end{equation}
For Ivlev, the Gamma Laplace transform gives
\begin{equation}\label{eq:ivlev}
m_\Phi=a\left[1-\left(\frac{\rho}{\rho+b}\right)^\alpha\right].
\end{equation}
Holling III has the corresponding one-dimensional Gamma expectation. The necessary-and-sufficient criterion applies to each response without a response-specific recurrence construction.

\subsection{Predator interference and a common threshold}
Both predator-dependent responses in Table~\ref{tab:responses} satisfy \eqref{eq:comparison}. Their rare-predator response is the same as Holling II. Thus, for any fixed admissible $F$ with $F\leq F(0)$, these three response families have exactly the same stationary coexistence threshold. In particular the interference coefficient $\eta$ does not enter the criterion. This identifies an equivalence of thresholds, not equality of interior invariant laws.

The response condition is broader than monotone predator interference. Let $\varphi$ be any admissible strictly increasing prey-dependent response and take
\[
\Phi(x,y)=\varphi(x)\left(1-\varepsilon\frac{y}{1+y}\frac{1+\sin y}{2}\right),\qquad0<\varepsilon<1.
\]
The factor lies between $1-\varepsilon$ and one, equals one at $y=0$, and need not be monotone in $y$. Nevertheless all response assumptions hold, including \eqref{eq:comparison}. This example shows why comparison with the rare-predator limit is the relevant condition.

For any such family, let $m_\Phi=\int\Phi(x,0)\mux(\mathrm dx)>0$. The exact criterion can be written
\begin{equation}\label{eq:beta-crit}
\beta>\beta_c:=\frac{\delta+\sigma_y^2/2}{m_\Phi}.
\end{equation}
If $\beta m_\Phi>\delta$, the critical predator noise is
\begin{equation}\label{eq:sy-crit}
\sigma_{y,c}=\sqrt{2(\beta m_\Phi-\delta)},
\end{equation}
and coexistence requires $0<\sigma_y<\sigma_{y,c}$. Equality excludes an interior invariant law. If $\beta m_\Phi\leq\delta$, no positive predator-noise intensity permits coexistence in this class. These statements assume prey establishment and fix all quantities defining $m_\Phi$.

\subsection{A bounded prey-growth law}
Consider the dimensionless per-capita growth
\begin{equation}\label{eq:bounded-growth}
F(x)=r-k\frac{x}{1+x},\qquad k>r>0.
\end{equation}
It is bounded and decreasing, with $F(0)=r$ and limiting value $r-k<0$. It satisfies Assumption~\ref{ass:F}: for example choose $d_F=(k-r)/2$ and then a sufficiently large $a_F$. The establishment condition remains $r>\sigma_x^2/2$. Formula~\eqref{eq:resident-density} gives
\begin{equation}\label{eq:betaprime}
p_X(x)=\frac{x^{\alpha-1}(1+x)^{-\alpha-\zeta}}{B(\alpha,\zeta)},\quad
\alpha=\frac{2r}{\sigma_x^2}-1,\quad
\zeta=1+\frac{2(k-r)}{\sigma_x^2}.
\end{equation}
Thus the resident law is beta-prime with $\zeta>1$ and mean $\alpha/(\zeta-1)$. It has polynomial rather than exponential tails. Theorem~\ref{thm:exact} still gives an exact stationary criterion for every response in Table~\ref{tab:responses}. This example separates the comparison argument from the explicit Gamma calculations available only for logistic growth.

\section{Reproducible numerical illustrations}\label{sec:numerics}
The computations illustrate two statements: the location of the invasion threshold and the distinction between a common threshold and different interior distributions. They are not a substitute for the proofs. All parameters are dimensionless; no observational data are fitted.

For the logistic model and Beddington--DeAngelis response, take
\begin{equation}\label{eq:num-parameters}
r=c=1,\quad a=1.5,\quad b=1,\quad\beta=1.5,\quad
\delta=0.5,\quad\sigma_x=0.6,\quad\sigma_y=0.5.
\end{equation}
Here $\alpha=41/9$ and $\rho=50/9$. Adaptive quadrature of \eqref{eq:holling-integral} gives
\[
m_\Phi\approx\NumericalFeeding,\qquad
\lambday\approx\NumericalLambda,\qquad
\sigma_{y,c}\approx\NumericalCriticalNoise.
\]
The quadrature was repeated with tighter absolute and relative tolerances; the feeding expectations differed by less than $10^{-8}$. Figure~\ref{fig:threshold} evaluates the same expectation as $\sigma_x$ varies and plots the zero contour of $\lambday$. The figure is restricted to prey-establishing noise intensities. Negative values exclude stationary coexistence by Corollary~\ref{cor:logistic}; no pathwise extinction claim is inferred from the plot.

\begin{figure}[t]
\centering\includegraphics[width=.88\textwidth]{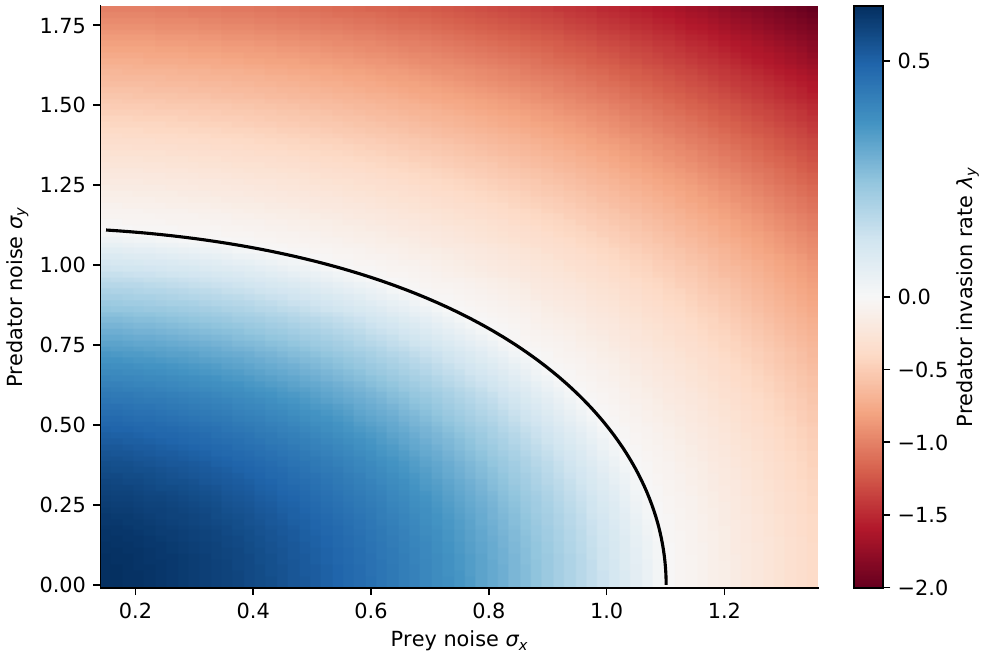}
\caption{Predator invasion rate for Holling II and the corresponding Beddington--DeAngelis and Crowley--Martin families. The solid contour is $\lambday=0$. Parameters are \eqref{eq:num-parameters}, except for the displayed noise intensities. The heatmap uses deterministic Gamma quadrature, not an estimate from trajectories}\label{fig:threshold}
\end{figure}

Figure~\ref{fig:resident} compares resident laws for logistic growth and \eqref{eq:bounded-growth} with $k=1.8$, and displays the feeding curves at positive predator density. The latter depend on $\eta$, although their values at $y=0$ coincide.
\begin{figure}[t]
\centering\includegraphics[width=\textwidth]{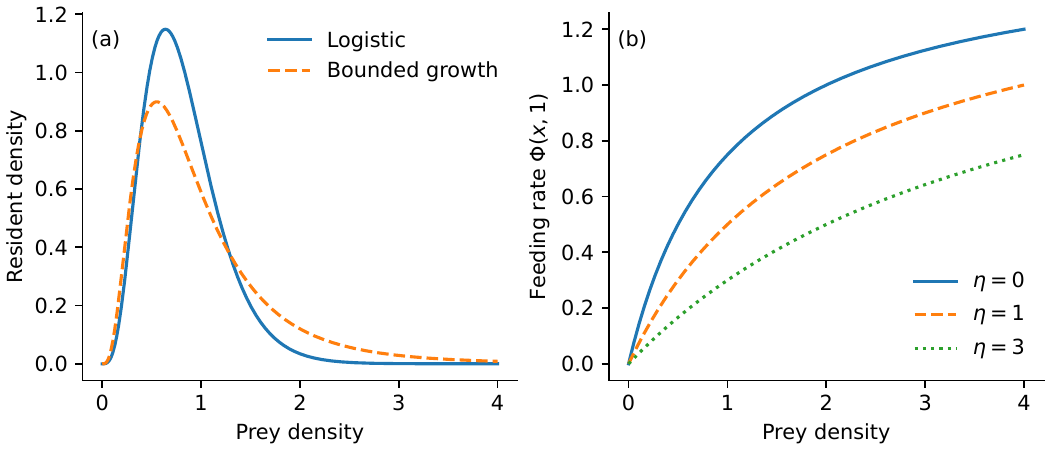}
\caption{(a) Explicit resident prey densities for logistic and bounded growth at $r=1$, $\sigma_x=0.6$, with $c=1$ or $k=1.8$, respectively. (b) Beddington--DeAngelis feeding rates at $y=1$ for three interference intensities; all three have the same rare-predator response}\label{fig:resident}
\end{figure}

For the full model, simulations use Euler--Maruyama in logarithmic coordinates, following the numerical SDE construction described in \cite{Higham}. With $U_n=\log X_n$, $V_n=\log Y_n$,
\begin{equation}\label{eq:scheme}
\begin{aligned}
U_{n+1}&=U_n+h\,r_x(e^{U_n},e^{V_n})+\sigma_x\sqrt h\,\xi_{x,n},\\
V_{n+1}&=V_n+h\,r_y(e^{U_n},e^{V_n})+\sigma_y\sqrt h\,\xi_{y,n},
\end{aligned}
\end{equation}
where the innovations are independent standard normal variables. Exponentiation preserves positivity; there is no reflection, clipping, or artificial extinction floor. We use 32 independent Brownian replicates, $T=1200$, burn-in 200, and $(X_0,Y_0)=(0.8,0.4)$. Fine increments at $h=0.01$ are aggregated to couple a second run at $h=0.02$. Common increments are also used across interference values $\eta=0,1,3$. The random seed is 20261020.

Table~\ref{tab:numerics} gives mean time averages, their standard errors across the independent replicates, and the time-step comparison. These standard errors measure between-replicate Monte Carlo uncertainty, not discretization or finite-horizon bias. Figure~\ref{fig:occupations} uses observations every 0.5 units after burn-in at the finer step. The empirical distributions differ across interference levels despite the shared positive invasion rate. They are finite-horizon numerical approximations of stationary occupations; the discretization is not claimed to preserve the invariant law exactly.

\begin{table}[t]
\centering\small
\caption{Time averages from the full logistic model. SE is the standard error across 32 replicate time averages at $h=0.01$; $\Delta$ is the mean paired difference (fine minus coarse)}\label{tab:numerics}
\begin{tabular}{rrrrrr}
\toprule $\eta$ & Coordinate & Mean, $h=.01$ & SE & Mean, $h=.02$ & $\Delta$\\\midrule
\NumericalRows
\bottomrule
\end{tabular}
\end{table}
\begin{figure}[t]
\centering\includegraphics[width=\textwidth]{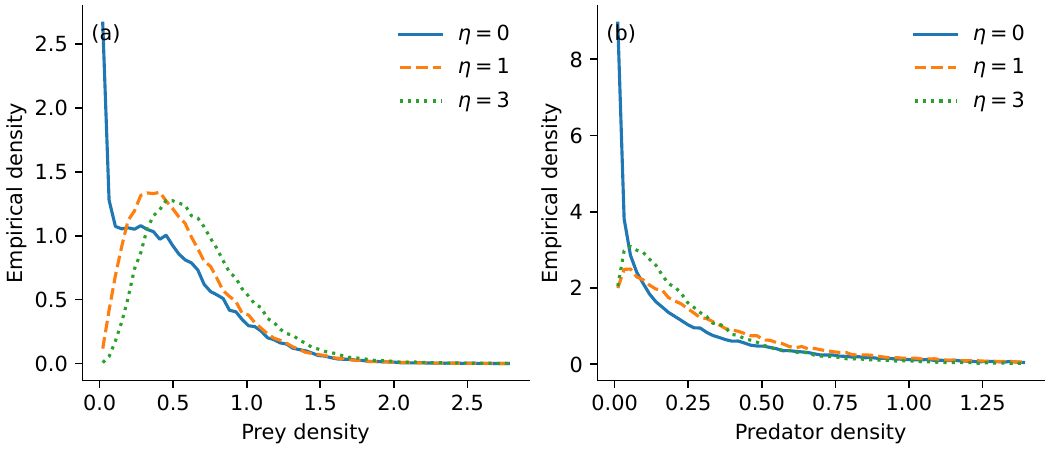}
\caption{Empirical marginal occupation densities for (a) prey and (b) predators, with the three interference intensities in Table~\ref{tab:numerics}. Line styles distinguish the models. The calculation pools 32 trajectories after burn-in; samples within a trajectory are temporally dependent}\label{fig:occupations}
\end{figure}

As independent checks, one million Gamma draws gave a feeding mean of $0.641245$ with Monte Carlo standard error $0.000169$, consistent with the quadrature value. The beta-prime density in \eqref{eq:betaprime} was checked for unit mass and its stated mean by numerical integration. Across the examples in Table~\ref{tab:numerics}, the largest absolute change of a population mean on halving the step was less than $4\times10^{-4}$. The sampled logarithmic drift means had absolute values below $6\times10^{-4}$, within their between-replicate standard errors. These diagnostics do not constitute an invariant-measure discretization error bound.

Online Resource~1 contains the Python reproduction and validation scripts, parameters, software versions, CSV summaries, and sampled occupations. The figures were produced using NumPy, SciPy, and Matplotlib from the stated equations.

\section{Discussion}
The exact criterion identifies a biological consequence of prey limitation and the absence of positive predator facilitation. A resident prey environment is at least as favorable to a rare predator as the prey distribution maintained under coexistence, and strictly more favorable on average when feeding increases strictly with prey density. Thus a predator maintained in an interior stationary regime must be able to invade the resident. The argument remains valid for saturating feeding responses and for nonlogistic growth with its largest per-capita value at zero.

Predator interference has two distinct roles. Its absence from $\Phi(x,0)$ makes it invisible to the invasion threshold. Its effect on $\Phi(x,y)$ for $y>0$ changes the feedbacks operating after establishment and can alter the stationary population distribution. The numerical examples illustrate this distinction; they do not establish a general monotonicity theorem for stationary means as interference varies.

The proof also separates two technical issues that should not be conflated. A positive moment estimate from deterministic initial data does not immediately establish the same moment for every pre-existing invariant law; the concave-cutoff argument provides that step. Similarly, positive moments do not control logarithmic singularities near zero. Stationary logarithmic identities follow instead from convergence in probability of stationary log increments divided by time and the ergodic theorem for the integrable drifts.

The general sufficient result fits existing stochastic persistence theory. The weighted trophic balance gives the verification needed for this model class; it is not a replacement for the invasion principle. In the logistic case a particular global condition in \cite{HNkol} fails, as shown in Appendix~\ref{app:HN}, but a more flexible abstract theorem still applies. Failure of one sufficient hypothesis is therefore not evidence that a model lies outside all previous persistence theory.

There are several limits to the conclusions. Cooperative feeding can violate \eqref{eq:comparison}, in which case the general sufficient criterion still holds but the necessity proof does not. Without $F\leq F(0)$, Theorem~\ref{thm:conditional} assumes rather than deduces prey establishment. Degenerate noise, correlated noises, and a full classification of critical paths are outside the present statements. In particular, exclusion of an interior invariant law at $\lambday=0$ is weaker than a complete critical extinction theorem. Quantitative convergence rates and statistically validated comparison with ecological observations are additional questions.

\appendix
\section{Verification of the abstract persistence framework}\label{app:benaim}
This appendix verifies that Theorem~\ref{thm:sufficient} is an application of Theorem~4.4 and Corollary~4.5 in Bena\"im's \emph{Stochastic Persistence}, arXiv:1806.08450v3, specified in the introduction. We use its abstract hypotheses, rather than its stronger response-specific ecological sufficient conditions.

The state space $M$ is locally compact and Polish, $M_0$ is closed and invariant, and the process is conservative and Feller by Proposition~\ref{prop:moments}. These give its standing hypotheses. Choose
\[
1<q<p<\min\{2,1+2d/\sig\},\qquad U=W^q.
\]
Then $\Lcal U\leq C_q-c_qU$, and $U$, $\widetilde U=c_qU$ are proper maps on $M$. To verify the extended-generator strong law, write $M_t^U=U(Z_t)-U(Z_0)-\int_0^t\Lcal U(Z_s)\,\mathrm ds$.

The drift of $W$ is bounded above by a constant $A$, and its diffusion vector has norm at most $\sigma_*W$. For $n\leq t\leq n+1$, It\^o's formula gives
\[
\log W(Z_t)-\log W(Z_n)\leq A+N_t-N_n,\qquad
\langle N\rangle_{n+1}-\langle N\rangle_n\leq\sig.
\]
The conditional exponential martingale inequality therefore implies
\[
\E\left[\sup_{n\leq t\leq n+1}W(Z_t)^p\mid\mathcal F_n\right]\leq C W(Z_n)^p.
\]
Together with \eqref{eq:moment}, the unconditional expectations are uniformly bounded for every fixed initial state. For $s=p/q\in(1,2)$, Burkholder--Davis--Gundy gives
\[
\E\sup_{n\leq t\leq n+1}|M_t^U-M_n^U|^s
\leq C\E\left(\int_n^{n+1}W(Z_t)^{2q}\,\mathrm dt\right)^{s/2}
\leq C_z.
\]
For the integer increments $D_n=M_n^U-M_{n-1}^U$, the sum $\sum_n\E|D_n/n|^s$ is finite. The martingale convergence criterion in $L^s$ and Kronecker's lemma yield $M_n^U/n\to0$ almost surely. Markov's inequality and Borel--Cantelli applied to the interval suprema give the continuous-time conclusion. On finite horizons all moments of $\sup W$ are finite by the same exponential estimate for $\log W$; thus the localizations used to define these true martingales can be removed. This verifies the control-at-infinity strong law.

For the boundary function take $a,b>0$ with $a+b=1$ and $a\lambdax-bD_y>0$, and set
\[
V=\log W-a\log x-b\log(y/\beta),\qquad H=\Lcal\log W-ar_x-br_y.
\]
Weighted arithmetic--geometric mean implies $V\geq0$ on $E$. The function $H$ extends continuously to $M$ and $|H|\leq CW$. Indeed
\[
\Lcal\log W=\frac{xF(x)-\delta y/\beta}{W}
-\frac{\sigma_x^2x^2+\sigma_y^2y^2/\beta^2}{2W^2}.
\]
The diffusion coefficients of $V$ are $\sigma_x(x/W-a)$ and $\sigma_y(y/(\beta W)-b)$, uniformly bounded. Its martingale part divided by time converges almost surely to zero. Also $\widetilde U/(1+|H|)\geq cW^{q-1}$, hence that map is proper. These establish the boundary-function hypotheses.

The ergodic boundary laws are $\mu_0=\delta_{(0,0)}$ and $\mu_x=\mux\otimes\delta_0$. In both, $\int\Lcal\log W\dd\mu=0$: $\log W$ is integrable, its diffusion is bounded, and its drift is integrable by the moment estimate. Thus
\[
\mu_0H=-a\lambdax+bD_y<0,\qquad\mu_xH=-b\lambday<0.
\]
The abstract persistence exponent is positive. The cited theorem and corollary, together with the uniqueness already proved, give for every $z\in E$
\begin{equation}\label{eq:empirical-convergence}
\frac1T\int_0^T\delta_{Z_t}\,\mathrm dt\ \Longrightarrow\ \pii
\quad\text{almost surely as }T\to\infty.
\end{equation}
They also give persistence in occupation: for every $\varepsilon>0$ a compact $K_\varepsilon\subset E$ exists for which the almost sure lower limit of the fraction of time spent in $K_\varepsilon$ is at least $1-\varepsilon$. No convergence rate follows from this verification.

\section{A particular dissipativity condition that fails}\label{app:HN}
For completeness, we show why Hypothesis~1.1(3) in \cite{HNkol} is not directly available for logistic growth in \eqref{eq:model}. Use the notation $f_1=r-cx-\Psi y$, $f_2=\beta\Phi-\delta$. That hypothesis requires some $u,v,\gamma>0$ for which the upper limit at infinity of
\[
Q=\frac{uxf_1+vyf_2}{1+ux+vy}
-\frac{u^2\sigma_x^2x^2+v^2\sigma_y^2y^2}{2(1+ux+vy)^2}
+\gamma(1+|f_1|+|f_2|+\sigma_x^2+\sigma_y^2)
\]
is negative. Fix arbitrary positive $u,v,\gamma$, choose $x_0>r/c$, put $A=cx_0-r$, and write $D=1+ux_0+vy$. Then
\[
ux_0f_1+vyf_2=-ux_0A+(v\beta-u)\Phi(x_0,y)y-v\delta y.
\]
Since $|f_1(x_0,y)|=A+\Phi(x_0,y)y/x_0$ and the diffusion correction is at least $-\sig/2$,
\[
Q(x_0,y)\geq-\frac{ux_0A}{D}-\delta-\frac{\sig}{2}
+\gamma A+\Phi(x_0,y)\left(\frac{\gamma y}{x_0}-\frac{|v\beta-u|}{v}\right).
\]
For large $y$ the last term is nonnegative. Choosing $x_0$ large enough gives $\liminf_{y\to\infty}Q(x_0,y)>0$, contradicting the required condition. The proof allows predator-dependent responses. It establishes failure of this particular hypothesis for logistic growth, not failure of all persistence frameworks; Appendix~\ref{app:benaim} verifies another one.

\section*{Statements and Declarations}
\input{submission/declarations.tex}

\end{document}

%% file: numerics/results.tex
\newcommand{\NumericalFeeding}{0.641401}
\newcommand{\NumericalLambda}{0.337102}
\newcommand{\NumericalCriticalNoise}{0.961355}

%% file: numerics/table.tex
\newcommand{\NumericalRows}{%
0 & $X$ & 0.4713 & 0.0024 & 0.4716 & -0.00034 \\
0 & $Y$ & 0.2991 & 0.0029 & 0.2987 & +0.00039 \\
1 & $X$ & 0.5598 & 0.0023 & 0.5599 & -0.00011 \\
1 & $Y$ & 0.3793 & 0.0047 & 0.3792 & +0.00009 \\
3 & $X$ & 0.6821 & 0.0026 & 0.6821 & -0.00004 \\
3 & $Y$ & 0.2690 & 0.0036 & 0.2690 & +0.00003 \\
}

%% file: author_details.tex
\newcommand{\AuthorOneDetails}{}
\newcommand{\AuthorTwoDetails}{}
\newcommand{\AuthorThreeDetails}{}

%% file: submission/declarations.tex
\paragraph{Data and code availability.}
No observational data were used. The numerical examples are generated from the equations and parameter values stated in the article. Online Resource~1 contains the complete reproduction script, random seed, software versions, numerical summaries, and sampled occupations.

\paragraph{Use of generative AI.}
Claude (Anthropic) was used only to check and correct the English language of the manuscript and to assist with typing parts of the manuscript in \LaTeX. All mathematical results, proofs, numerical code, and interpretations are the work of the authors, who take full responsibility for the content of this article.

%% file: main.bbl
\begin{thebibliography}{99}
\bibitem{Holling} Holling, C.S.: Some characteristics of simple types of predation and parasitism. Can. Entomol. \textbf{91}, 385--398 (1959). \url{https://doi.org/10.4039/Ent91385-7}
\bibitem{Beddington} Beddington, J.R.: Mutual interference between parasites or predators and its effect on searching efficiency. J. Anim. Ecol. \textbf{44}, 331--340 (1975). \url{https://doi.org/10.2307/3866}
\bibitem{DeAngelis} DeAngelis, D.L., Goldstein, R.A., O'Neill, R.V.: A model for trophic interaction. Ecology \textbf{56}, 881--892 (1975). \url{https://doi.org/10.2307/1936298}
\bibitem{Crowley} Crowley, P.H., Martin, E.K.: Functional responses and interference within and between year classes of a dragonfly population. J. N. Am. Benthol. Soc. \textbf{8}, 211--221 (1989). \url{https://doi.org/10.2307/1467324}
\bibitem{Kot} Kot, M.: Elements of Mathematical Ecology. Cambridge University Press, Cambridge (2001). \url{https://doi.org/10.1017/CBO9780511608520}
\bibitem{SBA} Schreiber, S.J., Bena\"im, M., Atchad\'e, K.A.S.: Persistence in fluctuating environments. J. Math. Biol. \textbf{62}, 655--683 (2011). \url{https://doi.org/10.1007/s00285-010-0349-5}
\bibitem{HNkol} Hening, A., Nguyen, D.H.: Coexistence and extinction for stochastic Kolmogorov systems. Ann. Appl. Probab. \textbf{28}, 1893--1942 (2018). \url{https://doi.org/10.1214/17-AAP1347}
\bibitem{HNC} Hening, A., Nguyen, D.H., Chesson, P.: A general theory of coexistence and extinction for stochastic ecological communities. J. Math. Biol. \textbf{82}, 56 (2021). \url{https://doi.org/10.1007/s00285-021-01606-1}
\bibitem{HL} Hening, A., Li, Y.: Stationary distributions of persistent ecological systems. J. Math. Biol. \textbf{82}, 64 (2021). \url{https://doi.org/10.1007/s00285-021-01613-2}
\bibitem{HNfood} Hening, A., Nguyen, D.H.: Stochastic Lotka--Volterra food chains. J. Math. Biol. \textbf{77}, 135--163 (2018). \url{https://doi.org/10.1007/s00285-017-1192-8}
\bibitem{NS} Nguyen, D.H., Strickler, E.: A method to deal with the critical case in stochastic population dynamics. SIAM J. Appl. Math. \textbf{80}, 1567--1589 (2020). \url{https://doi.org/10.1137/20M131134X}
\bibitem{DNY} Du, N.H., Nguyen, D.H., Yin, G.: Conditions for permanence and ergodicity of certain stochastic predator--prey models. J. Appl. Probab. \textbf{53}, 187--202 (2016). \url{https://doi.org/10.1017/jpr.2015.18}
\bibitem{LG} Li, S., Guo, S.: Permanence of a stochastic prey--predator model with a general functional response. Math. Comput. Simul. \textbf{187}, 308--336 (2021). \url{https://doi.org/10.1016/j.matcom.2021.02.025}
\bibitem{ZLL} Zou, X., Li, Q., Lv, J.: Stochastic bifurcations, a necessary and sufficient condition for a stochastic Beddington--DeAngelis predator--prey model. Appl. Math. Lett. \textbf{117}, 107069 (2021). \url{https://doi.org/10.1016/j.aml.2021.107069}
\bibitem{JD} Ji, W., Deng, M.: Coexistence and extinction of a periodic stochastic predator--prey model with general functional response. Adv. Difference Equ. \textbf{2020}, 436 (2020). \url{https://doi.org/10.1186/s13662-020-02898-1}
\bibitem{ZY} Zhang, X., Yang, Q.: Dynamical behavior of a stochastic predator-prey model with general functional response and nonlinear jump-diffusion. Discrete Contin. Dyn. Syst. B \textbf{27}, 3155--3175 (2022). \url{https://doi.org/10.3934/dcdsb.2021177}
\bibitem{Previous} S\'anchez, A., Valencia, L.A., Ramirez Osorio, J.M.: The stochastic Gause predator--prey model: Noise-induced extinctions and invariance. Math. Biosci. Eng. \textbf{22}, 1999--2019 (2025). \url{https://doi.org/10.3934/mbe.2025073}
\bibitem{Mao} Mao, X.: Stochastic Differential Equations and Applications, 2nd edn. Horwood Publishing, Chichester (2007). \url{https://doi.org/10.1533/9780857099402}
\bibitem{Khasminskii} Khasminskii, R.: Stochastic Stability of Differential Equations, 2nd edn. Stochastic Modelling and Applied Probability, vol. 66. Springer, Berlin (2012). \url{https://doi.org/10.1007/978-3-642-23280-0}
\bibitem{DPZ} Da Prato, G., Zabczyk, J.: Ergodicity for Infinite Dimensional Systems. London Mathematical Society Lecture Note Series, vol. 229. Cambridge University Press, Cambridge (1996). \url{https://doi.org/10.1017/CBO9780511662829}
\bibitem{Higham} Higham, D.J.: An algorithmic introduction to numerical simulation of stochastic differential equations. SIAM Rev. \textbf{43}, 525--546 (2001). \url{https://doi.org/10.1137/S0036144500378302}
\end{thebibliography}
